\newtheorem{theorem}{Theorem}[section]
\newtheorem{lemma}[theorem]{Lemma}
\newtheorem{remark}[theorem]{Remark}
\newtheorem{proposition}[theorem]{Proposition}
\newtheorem{corollary}[theorem]{Corollary}
\newtheorem{conjecture}[theorem]{Conjecture}
\numberwithin{equation}{section}
\newcommand{\F}{{\mathcal{F}}}
\begin{document}
\title{On the existence of closed $C^{1,1}$ curves of constant curvature}

\author{Daniel Ketover}\address{Institute for Advanced study, 1 Einstein Drive Princeton, NJ 08540}
 \email{dketover@math.princeton.edu}
\author{Yevgeny Liokumovich}\address{Institute for Advanced study, 1 Einstein Drive Princeton, NJ 08540}
\email{ylio@ias.edu}
\maketitle\thanks

\begin{abstract}
We show that on any Riemannian surface for each $0<c<\infty$ there exists an immersed $C^{1,1}$ curve that is smooth and with curvature equal to $\pm c$ away from at most one point. We give examples 
showing that, in general, the regularity of the curve obtained by our procedure cannot be improved. \end{abstract}
\section{Introduction}
\noindent

%We prove that any Riemannian two-sphere with positive curvature contains a smooth closed embedded curve of curvature $c$ for any $c>0$, which correspond to periodic orbits of a charge in a magnetic field. 

A celebrated theorem of Birkhoff asserts that two-sphere endowed with a Riemannian metric contains a closed geodesic.  A long-standing question is the following conjecture of V.I. Arnold (\cite{A}, 1981-9): 

%For convex surfaces, this result was obtained by other methods by Calabi-Cao \cite{CC}.    

\begin{conjecture}[Arnold]
Every Riemannian two-sphere contains two smoothly immersed curves of curvature $c$ for any $0<c< \infty$. 
\end{conjecture}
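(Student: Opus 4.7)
The plan is to attack Arnold's conjecture via min-max methods, in the spirit of Birkhoff's proof for closed geodesics but applied to a modified functional whose critical points are exactly constant-curvature curves. On an oriented Riemannian surface, an immersed closed curve $\gamma$ has constant geodesic curvature $c$ if and only if $\gamma$ is a critical point of
\[ F_c(\gamma) = L(\gamma) + c\,A(\gamma), \]
where $L$ is length and $A$ is a signed area, defined via its first variation $\frac{d}{dt}A = \int \langle \partial_t \gamma, \nu \rangle\,ds$. On $S^2$ one has the canonical $\pi_2$-sweepout $\{\gamma_t\}_{t\in[0,1]}$ with $\gamma_0,\gamma_1$ constant maps, and one takes
\[ W_c = \inf_{\{\gamma_t\}}\, \sup_{t}\, F_c(\gamma_t) \]
as the min-max value. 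A pull-tight procedure, using a drifted curve-shortening flow whose stationary points have curvature $c$, should produce a critical sequence converging (in some weak sense) to a closed curve of constant curvature $c$.

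To produce the second curve, one would either invoke Lusternik--Schnirelmann category on the space of unparameterized immersed loops, or use the richer topology of the free loop space of $S^2$ to run a second independent min-max scheme on a distinct homology class, and then argue that the two resulting critical values differ. An alternative is to play local minima of $F_c$ against mountain-pass solutions: if the infimum in a suitable class is attained by a smooth curve of curvature $c$, any sweepout passing above it forces a genuinely second critical point.

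The main obstacle -- and the reason the full conjecture remains open -- is failure of compactness and regularity for $F_c$. Unlike length, $F_c$ admits no uniform lower bound independent of the topology of $\gamma$: a minimizing sequence can collapse to a point, pinch off a small almost-geodesic bubble, or develop a corner where two smooth arcs meet tangentially. The abstract of this paper is symptomatic of exactly this difficulty: the limit of such a min-max is only $C^{1,1}$, with curvature $\pm c$ away from a single bad point, and the stated examples show this is sharp in general. Upgrading to a smooth curve of curvature exactly $c$ (let alone to a pair of distinct such curves) would require either a bumpy-metric genericity argument to rule out the degenerate singular configurations, or a delicate analysis of the min-max flow near the bubbling locus to exclude the mixed-sign singular point that the $C^{1,1}$ construction tolerates.
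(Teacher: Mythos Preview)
The statement is Arnold's Conjecture, presented in the paper as an \emph{open} conjecture; the paper contains no proof of it, and you yourself concede midway that ``the full conjecture remains open.'' What you have written is therefore not a proof but a sketch of possible min-max approaches together with the obstacles that block them, so there is nothing to compare against a proof in the paper---none exists.

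That said, your discussion is accurate and aligns with what the paper actually proves in its main theorem. The paper runs a one-parameter min-max for $\mathcal{F}_c(\partial R) = \mathrm{Length}(\partial R) - c\,\mathrm{Area}(R)$ on \emph{open sets} rather than on immersions---a deliberate choice, explicitly noted in the introduction, to keep length and area bounded and avoid precisely the divergence you flag---then uses Zhou--Zhu regularity and a combinatorial desingularization argument on the resulting network to show it has at most one singular point, through which the curve is $C^{1,1}$ with curvature jumping from $c$ to $-c$. The examples in Section~\ref{examples} confirm that this $C^{1,1}$ point is genuinely unavoidable for the construction, matching your diagnosis that upgrading to smoothness is the real obstruction. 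Finally, your proposal offers nothing concrete toward producing \emph{two} curves: the Lusternik--Schnirelmann and second-sweepout ideas are only gestures, and the paper makes no attempt in that direction either.
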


Closed curves of curvature $c$ have a physical interpretation as closed orbits of a charged particle subject to a magnetic field.  The study of magnetic orbits has a long history and has been studied from many perspectives including Morse-Novikov theory, variational functionals, dynamical systems and Aubry-Mather theory. 
%check source, year: V. I. Arnold. Arnold’s Problems. Springer, Berlin, 2004, Translated and revised edition of the 2000
%Russian original, with a preface by V. Philippov, A. Yakivchik and M. Peters
%1994-35,1996-18

Two-spheres with arbitrary metric also contain at least three \emph{embedded} closed geodesics
(\cite{LS}, \cite{Ly}, \cite{G}).  
%This result was obtained by Grayson \cite{G} using flow methods. 
For convex two-spheres there is a min-max argument due to Calabi-Cao \cite{CC} to obtain at least one. 

The analogous conjecture for constant curvature due to S. Novikov (c.f. Section 5 in \cite{N}) asserts:

\begin{conjecture}[Novikov]
Every Riemannian two-sphere contains a smoothly embedded curve of curvature $c$ for any $0<c< \infty$. 
\end{conjecture}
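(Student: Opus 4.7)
The plan is to realize such a curve as a critical point of the magnetic energy
\[ F_c(\gamma) = L(\gamma) + c \cdot A(\gamma), \]
where $L$ is Riemannian length and $A$ is the signed area enclosed by $\gamma$ (well defined up to an additive constant on $S^2$). Smooth critical points of $F_c$ on the space of embedded loops are precisely embedded curves of geodesic curvature $\pm c$, so I would attempt a min-max construction: fix a sweepout $\{\gamma_t\}_{t\in[0,1]}$ of $S^2$ by embedded loops (for instance the level sets of a Morse function with two critical points), take all homotopic sweepouts in the space of embedded loops, and define the width
\[ W(c) = \inf_{\{\gamma_t\}} \; \max_{t\in[0,1]} F_c(\gamma_t). \]

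The steps in order would be: (1) verify that $W(c)$ is finite and strictly greater than the $F_c$-value of a trivial (point) curve, so that the min-max is geometrically nontrivial, and handle the two easy limiting regimes separately — for $c$ small, perturb a closed geodesic from Birkhoff's theorem via an implicit function theorem, while for $c$ large, small geodesic circles in normal coordinates around any point are nearly critical and a Lyapunov-Schmidt reduction can be used to produce exact critical curves; (2) in the intermediate range, run a pseudo-gradient flow for $F_c$ on the space of embedded curves and extract a Palais-Smale sequence $\gamma_n$ with $F_c(\gamma_n) \to W(c)$ and $\nabla F_c(\gamma_n) \to 0$; (3) pass to a subsequence to obtain a limit $\gamma_\infty$ and verify that it is a nontrivial, smoothly \emph{embedded}, closed curve of geodesic curvature $c$.

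Step (3) is where the real difficulty lies. A Palais-Smale sequence of embedded loops for $F_c$ can collapse to a point, break into components of opposite curvature orientation that individually minimize, or develop self-tangencies which in the limit become cusps or crossings of an immersed curve with an isolated singular point. Ruling out collapse should follow from an isoperimetric lower bound along the sweepout, and splitting can usually be excluded by judicious sweepout choice, but the third failure mode is genuine: the main theorem of the present paper shows that in general the best one can extract from this variational setup is a $C^{1,1}$ immersed curve, smooth away from at most one point, and the examples advertised in the abstract show that this regularity is sharp for the procedure. Thus the full conjecture of Novikov appears to require a new geometric input beyond the bare min-max — either an a priori estimate that rules out the formation of the singular point and forces embeddedness, or an entirely different framework (such as Floer-type methods for magnetic flows, or a hybrid with curve-shortening under a magnetic drift) that tracks embeddedness more tightly throughout the sequence.
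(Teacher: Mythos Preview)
The statement you are attempting to prove is \emph{not} a theorem of the paper: it is Novikov's conjecture, stated as Conjecture~1.2 and explicitly left open. The paper contains no proof of it, and indeed its main result (Theorem~\ref{main1}) establishes only the existence of a $C^{1,1}$ curve of curvature $\pm c$, with the examples in Section~\ref{examples} showing that the $C^{1,1}$ regularity is sharp for the min-max procedure. So there is nothing to compare your proposal against.

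Your proposal is, moreover, not a proof but a candid outline that diagnoses its own failure. You correctly identify that the min-max for $\mathcal{F}_c$ (or your signed-area variant $F_c$) produces a Palais--Smale or almost-minimizing sequence, and you correctly isolate the obstruction in Step~(3): the limit may acquire a single transverse self-intersection at which the curvature flips sign, yielding only a $C^{1,1}$ immersion rather than a smooth embedded curve. This is exactly the phenomenon the paper documents, and your final paragraph concedes that ruling it out ``appears to require a new geometric input beyond the bare min-max.'' That concession is accurate, but it means what you have written is a discussion of why the conjecture is hard, not a proof of it. The genuine gap is precisely the one you name: no argument is offered to exclude the figure-eight limit, and on general surfaces (flat tori, hyperbolic surfaces) the paper shows such limits actually occur. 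Whether the two-sphere topology alone forces embeddedness remains the open content of Novikov's conjecture.
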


The existence of immersed constant curvature $c$ curves is known on two-spheres with $c$ both large and small or when the two-sphere is nearly round (\cite{G1},\cite{G2}).  For $c$ very large, one can always obtain tiny geodesic circles of high curvature.  For $c$ near $0$, one can use the implicit function theorem to find constant curvature curves near a non-degenerate closed geodesic (obtained, say, from the min-max method of Birkhoff).  

On spheres of positive curvature it was shown by Schneider \cite{S1} that there exist at least two immersed constant curvature curves for each value of the curvature.  Thus the remaining issue in this case has been to obtain embeddedness.  The strongest result in this direction was obtained by Schneider \cite{S2} who showed that $1/4$-pinched surfaces always contain two embedded curves of any curvature $c$ (see also \cite{RS2} for the case $c$ small). 

We show on any surface (not necessarily a two-sphere) the existence of closed $C^{1,1}$ curves where the curvature may jump between $c$ and $-c$.  Our curves also are nearly \emph{nearly embedded}:

\begin{theorem}[Existence of $C^{1,1}$ curvature $\pm c$  curves] \label{main1}
Let $(\Sigma, g)$ be a closed Riemannian surface.  For each $0< c< \infty$ there exists a closed curve $\gamma_c$ such that one of the following holds:
\begin{enumerate}
\item $\gamma_c$ is smooth, quasi-embedded with curvature equal to $c$
\item There exists $p\in\Sigma$ such that in $\Sigma\setminus \{p\}$ the curve $\gamma_c$ is smooth, quasi-embedded, with curvature equal to $\pm c$.  Moreover, the tangent cone of $\gamma_c$ at $p$ consists of two distinct lines, and $\gamma_c$ is $C^{1,1}$ through $p$.
\end{enumerate}
\end{theorem}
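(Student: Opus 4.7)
The plan is a one-parameter min-max for the isoperimetric energy
$$\mathcal{E}_c(\Omega) := \mathcal{H}^1(\partial^* \Omega) - c\cdot\mathrm{Area}(\Omega)$$
on Caccioppoli subsets $\Omega\subset\Sigma$; smooth critical points of $\mathcal{E}_c$ are precisely the domains whose boundary is a smoothly immersed curve of geodesic curvature $c$. I would work with continuous (in $L^1$) sweepouts $\{\Omega_t\}_{t\in[0,1]}$ with $\Omega_0=\emptyset$ and $\Omega_1=\Sigma$, and define
$$W_c := \inf_{\{\Omega_t\}}\ \sup_{t\in[0,1]} \mathcal{E}_c(\Omega_t).$$
Since every such sweepout must contain a slice whose reduced boundary has length at least the $1$-width of $\Sigma$ (a positive constant independent of the sweepout), one has $W_c > \mathcal{E}_c(\Sigma) = -c\cdot\mathrm{Area}(\Sigma)$, so $W_c$ is a non-trivial mountain-pass level for $\mathcal{E}_c$.

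Next I would extract a critical point from a min-maxing sequence. Choosing $\{\Omega^n_t\}$ with $\sup_t \mathcal{E}_c(\Omega^n_t)\to W_c$ and near-maximizing times $t_n$, the perimeters of $\Omega^n_{t_n}$ are uniformly bounded; after an Almgren--Pitts-style pull-tight, $L^1$-compactness of Caccioppoli sets together with varifold compactness produces a limit set $\Omega_\infty$ and a limit varifold $V_\infty$ whose first variation is controlled by $c$ in an appropriate weak sense. On one-sheeted portions of $V_\infty$---those coinciding with the reduced boundary of $\Omega_\infty$---the generalized curvature equals $+c$ with respect to the outward normal; on portions where two oppositely-oriented arcs of $\partial^*\Omega^n_{t_n}$ have collapsed together in the limit, the orientations cancel and the effective curvature is $-c$.

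The heart of the argument is the regularity theory for $V_\infty$. The $L^\infty$ bound on the first variation forces $V_\infty$ to be supported on a $C^{1,1}$ immersed curve, smooth and of curvature $\pm c$ away from a discrete set of singular points, and quasi-embedded where smooth (tangential contacts only, by a standard tangent-cone matching argument). Any genuine singular point must be a transverse crossing of two smooth arcs of opposite curvature sign---higher-order tangencies and cusps are excluded by the curvature equation---so each such point is $C^{1,1}$ in the sense of the theorem, with tangent cone equal to two distinct lines. Finally, the one-parameter nature of the sweepout, together with a local energy-decreasing deformation, rules out more than one singular point: a second transverse crossing could be smoothed into a competing sweepout with strictly smaller max energy, contradicting the min-max characterization of $V_\infty$.

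The main obstacle I anticipate is this singular-point analysis---simultaneously establishing the precise nature of the singularity (transverse crossing of opposite-sign branches, no cusps or triple points) and its uniqueness. The examples alluded to in the abstract show that the lone $C^{1,1}$ singularity cannot be removed in general, so the argument must admit exactly one such point rather than eliminate them all. The delicate step is combining sharp varifold regularity with the one-parameter sweepout constraint used to bound the number of singularities.
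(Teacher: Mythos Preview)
Your overall strategy coincides with the paper's: run one-parameter min-max for $\mathcal{F}_c$, invoke the Zhou--Zhu regularity to obtain a constant-curvature network realizing the width, and then use competing sweepouts to constrain the singular set. You have also correctly identified the singular-point analysis as the crux. However, there is a genuine gap in how you describe that analysis.

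You write that the $L^\infty$ first-variation bound and the curvature equation already force every singular point to be a transverse crossing of two arcs, after which only \emph{uniqueness} remains. This is not what the regularity theory delivers. The Zhou--Zhu output is a network whose singular vertices have tangent cones consisting of an even number $m\geq 4$ of half-rays summing to zero; nothing in the varifold regularity excludes $m\geq 6$, nor does it bound the number of vertices. Both the reduction to $m=4$ \emph{and} the reduction to a single vertex are accomplished by the same sweepout-contradiction mechanism, and your sketch undersells its content. The point is this: to build a competing sweepout one must (i) round corners inward to get $R_-\subset R$, (ii) round corners outward to get $R_+\supset R$, (iii) contract $R_-$ to $\emptyset$ and expand $R_+$ to $\Sigma$ via constrained minimization, and, crucially, (iv) homotope $R_-$ to $R_+$ while keeping $\mathcal{F}_c$ strictly below $\mathcal{F}_c(\partial R)$. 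Step (iv) requires passing one wedge at a time back through the singular configuration, and at the moment a wedge is restored the value of $\mathcal{F}_c$ climbs back toward $\mathcal{F}_c(\partial R)$; the only thing keeping it \emph{strictly} below is the presence of another still-rounded wedge elsewhere. A single valence-$4$ vertex provides exactly two wedges on each side and gives no such slack, which is precisely why that configuration survives; any additional vertex, or any vertex of valence $\geq 6$, supplies the extra wedge needed to complete the sweepout and derive a contradiction. Your one-line ``smooth the second crossing'' does not capture this bookkeeping, and without it the argument does not close. You should also note the auxiliary ingredients the paper isolates: a quantitative ``rounding corners decreases $\mathcal{F}_c$'' lemma (the area loss is lower order than the length loss) and a constrained-minimization lemma producing the homotopies in (iii) under the standing assumption that no stable smooth curvature-$c$ curve exists.
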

\begin{remark}
Note that in case (2), the curve $\gamma_c$ is a kind of ``figure 8," but the curvature changes sign at the crossing point $p$ and is thus only $C^{1,1}$ through $p$.
\end{remark}
A curve is \emph{quasi-embedded} if it is either embedded or has finitely many self-touching points, i.e. points $p$ where the tangent cone consists of a line with multiplicity two, and in a small ball around the point the curve consists of two arcs meeting tangentially at $p$. 

We show in Section \ref{examples} that Theorem \ref{main1} is sharp in the sense that on flat tori our construction requires a $C^{1,1}$ point.  Similarly it is well-known (cf. Example 3.7 in \cite{G2}) that on closed hyperbolic surfaces there are no closed curvature $1$ curves (all horoballs have \emph{dense} projection to a closed surface). Our procedure produces examples with a $C^{1,1}$ point in these cases as well.  Thus the $C^{1,1}$ regularity is a threshold beyond which solutions may not exist.

On the three-legged starfish (cf. Figure 2 in \cite{CC}), which is known to have Almgren-Pitts width realized by a smooth curve $\alpha$ with an immersed point, our procedure for $c$ small produces a $C^{1,1}$ curve $\beta_c$ that converges smoothly away from the $C^{1,1}$ point to $\alpha$ as $c\rightarrow 0$. On the other hand, by the implicit function theorem if the metric is generic one should be able to produce a smoothly immersed curve $\gamma_c$ with curvature equal to $c$ converging smoothly to $\alpha$ as $c\rightarrow 0$.  Thus Theorem \ref{main1}, in some sense, picks out the non-smooth curve in this example.

%\begin{theorem}[Existence of closed embedded cmc curves]\label{poscurv1}
%Let $(\Sigma, g)$ be a two-sphere with strictly positive curvature.  Then for each $0< c< \infty$ there exists a smooth embedded closed curve $\gamma_c$ of curvature $c$.  
%\end{theorem}

Our proof of Theorem \ref{main1} uses methods of geometric analysis and a key ingredient is the recent min-max theory for the $\mathcal{F}_c$ functional due to Zhou-Zhu \cite{ZZ}, \cite{ZZ2}.  This functional is defined as follows.  Fix a closed Riemannian surface $\Sigma$ and a value $c>0$.  Given an open set $R\subset\Sigma$ with smooth boundary, define
\begin{equation}\label{cfunct}
\mathcal{F}_c(\partial R) = \mbox{Length}(\partial R) - c\mbox{Area}(R). 
\end{equation}

Critical points of this functional are curves of curvature $c$.  If one considers a one parameter family of open sets $\{R_t\}_{t\in [0,1]}$ sweeping out the manifold in the sense that they begin at $R_0=\emptyset$ and end at $R_1=\Sigma$, one sees that $\mathcal{F}_c(R_0)= 0$ and $\mathcal{F}_c(R_1)=-c\mbox{Area}(\Sigma)$.  By the Isoperimetric Inequality, the function $\mathcal{F}_c(R_t)$  has a positive derivative at $t=0$, and thus a critical point of the $\mathcal{F}_c$ functional must exist by general min-max principles.   In higher dimensions, Zhou-Zhu showed that one obtains a smooth quasi-embedded hypersurface $X_c$ of curvature equal to $c$.

In two dimensions, Zhou-Zhu showed \cite{ZZ2} (as in Pitts' \cite{P} work) that their method gives rise to a geodesic net with curvature equal to $\pm c$.  The goal of this paper is to upgrade the regularity from a constant curvature net to a $C^{1,1}$ immersion.

One advantage of using geometric measure theory applied to the $\mathcal{F}_c$ functional is that we only consider it on open sets in the surface, and thus the length and area terms in \eqref{cfunct} are always bounded.  If one considers the functional \eqref{cfunct} on \emph{immersions}, one has to rule out a sequence of curves whose $\mathcal{F}_c$ functional is converging to the critical value but whose areas and lengths are diverging.

%Our approach in proving Theorem \ref{poscurv1} is analogous to that of Calabi-Cao \cite{CC} and in the spirit of the methods of geometric analysis.  

Note that Theorem \ref{main2} does \emph{not} assert that the curvature $\pm c$ curves arise as the width for the $\mathcal{F}_c$ functional.   However, with more work and a suitable deformation theorem one can also obtain this additional information.

\begin{remark}
We have been informed that C. Bellettini, O.Chodosh,  C. Mantoulidis have independently obtained a result similar to Theorem \ref{main1} using the Allen-Cahn approach to min-max constructions. 
\end{remark}

\begin{figure} 
   \centering	
	\includegraphics[scale=0.75]{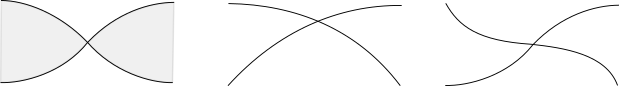}
	\caption{Min-max constant curvature
	curves can have a $C^{1,1}$ point
	like the curve on the left. (Note that
	it is $c$-convex to the shaded region).
	Intersections as in the
	second and the third diagram 
	cannot occur.}
	\label{fig:c11}
\end{figure}

\emph{Acknowledgements}: D.K. would like to thank Otis Chodosh, Rob Kusner and Yair Minsky for conversations. The authors wish to acknowledge the hospitality of the Institute for Advanced Study where part of this work was conducted and F.C. Marques for organizing the special year there on Variational Methods.  D.K. was partially supported by NSF grant DMS-1401996.  Y.L. was partially supported by NSF grant DMS-1711053.

\section{Existence of constant curvature network}
Let $\Sigma$ be a Riemannian surface and fix $c>0$.
For an open set $U \subset \Sigma$ of finite perimeter define
the functional $\F_c(\partial U)$:

$$\F_c(\partial U) = \mbox{Length}(\partial U) - c\mbox{Area} (U)$$

Consider the set $\Pi$ of all one parameter families $\{\Omega_t\}_{t=0}^1$ of open sets of $\Sigma$ with smooth boundary so that $\Omega_0=\emptyset$ and $\Omega_1=\Sigma$.  The `width' for the $\mathcal{F}_c$ functional is given by 
\begin{equation}
W_c=\inf_{\{\Omega_t\}\in\Pi}\sup_{t\in [0,1]}\mathcal{F}_c(\Omega_t).
\end{equation}

It follows from the isoperimetric inequality that $W_c>0$ for each $c>0$.

In \cite{ZZ} Zhou and Zhu developed a min-max theory 
for the functional $\F_c$ and produced a critical point of $\mathcal{F}_c$ at the critical value $W_c$.  In \cite{ZZ2} they address the one-dimensional case,
obtaining the existence of a network with constant curvature arcs, which is a stationary point for the functional $\F_c$.
Below we summarize their result and partly sketch the argument for the reader's convenience.
%Though they do not address the one-dimensional case, their theory gives the following result in this setting.

Let us first introduce some notation. Suppose $R\subset\Sigma$ is a region with boundary $\gamma$ consisting of finitely many smooth arcs that have curvature greater than or equal to $c$ relative to the region $R$, and which meet with (interior) angles between $0$ and $\pi$.  Let us call such a set \emph{$c$-convex to $R$}.  
Similarly, we call the boundary curve $\gamma$ \emph{$c$-concave to $R$}
if all arcs of $\gamma$ have curvature less than or equal to $c$ relative $R$ and
meet with (interior) angles between $\pi$ and $2 \pi$.

\begin{theorem} [Existence of constant curvature network (Zhou-Zhu)]\label{main2}
Let $(\Sigma, g)$ be a closed Riemannian surface.  For each $0 < c< \infty$ there exists a
set $R$ with $\partial R = \gamma$, such that
%cc-geodesic net $\mathcal{G}_c$ such that \begin{equation}

%\end{equation}
%Moreover, one of the following holds:
\begin{enumerate}
\item $W_c = \mathcal{F}_c(\partial R)$;

\item $\gamma$ is either smooth with curvature $c$ or there exists finitely many points $p_1,...p_k \in \Sigma$, such that $\gamma \setminus (p_1\cup p_2\cup...\cup p_k)$
is smooth and has constant curvature $\pm c$;

\item  The tangent cone to $\gamma$ at each $p_i$ consist of several half rays (with multiplicities) that sum to zero;

\item $\gamma$ is $c$-convex to $R$;

%\item  $\gamma$ has at most finitely many self-touching points (where the tangent cone is a multiplicity two line). 
%\item $\gamma$ has at most finitely many self-touching points.

%\item $\gamma_c$ is a smooth closed quasi-embedded curve with curvature $c$
%\item There exists $p\in\Sigma$ such that $\mathcal{G}_c$ is smooth with curvature $c$ in $\Sigma\setminus\{p\}$ and $\mathcal{G}_c$ is quasi-embedded.
\end{enumerate}
\end{theorem}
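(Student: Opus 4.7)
The plan is to follow the min-max scheme of Zhou--Zhu \cite{ZZ,ZZ2}, adapted to the one-dimensional setting, and then extract structural information at singular points from stationarity. Since the statement is attributed to Zhou--Zhu, my aim is to sketch the argument in a way suitable for the paper.

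First I would set up a discretized min-max framework in the style of Almgren--Pitts for the parameter space $[0,1]$. Given a minimizing sequence of sweepouts $\{\Omega^i_t\}\in\Pi$ with $\sup_t \mathcal{F}_c(\Omega^i_t)\to W_c$, one discretizes in $t$ to obtain finite collections of open sets in $\Sigma$ whose consecutive fineness (measured in the flat topology on boundaries) tends to zero. The positivity $W_c>0$ from the isoperimetric inequality ensures the min-max value is nontrivial, and one only needs to consider slices with $\mathcal{F}_c$ close to $W_c$, which a priori bounds the lengths of the boundaries.

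Next I would perform a pull-tight deformation: any slice $\partial\Omega^i_{t_i}$ whose boundary $(|\partial\Omega^i_{t_i}|, \Omega^i_{t_i})$ is far in varifold distance from the class of critical varifolds for $\mathcal{F}_c$ can be replaced by a locally $\mathcal{F}_c$-decreasing isotopy, keeping the sweepout structure. Combined with the standard Pitts combinatorial argument, this yields a sequence $\partial\Omega^i_{t_i}\to V$ in the varifold sense where $V$ is stationary for $\mathcal{F}_c$, meaning the first variation (length minus $c$ times enclosed area) vanishes along any compactly supported deformation. The key additional input is the almost-minimizing property in small annuli: some subsequence is, in every sufficiently small annulus, almost minimizing for $\mathcal{F}_c$ on one side. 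This is where Zhou--Zhu's one-sided replacement for prescribed mean curvature enters and is, I expect, the main technical obstacle to sketch faithfully.

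Given an almost minimizing stationary varifold $V=|\gamma|$ in a surface, I would invoke the one-dimensional regularity result of \cite{ZZ2} (analogous to Pitts \cite{P}): $\gamma$ is a finite union of $C^{1,1}$ arcs meeting at finitely many junctions $p_1,\dots,p_k$, and away from the $p_i$ the curvature equals $\pm c$, with the sign depending on which side of the arc the set $R$ (a flat limit of the $\Omega^i_{t_i}$) lies. Stationarity of $\mathcal{F}_c$ at each $p_i$ tested against compactly supported vector fields gives, after passing to the tangent cone, the balancing condition that the unit tangent half-rays (with multiplicities) sum to zero, which is item (3). Item (1), $W_c=\mathcal{F}_c(\partial R)$, is built into the construction since length and $c\cdot$area pass to the limit under the flat/varifold convergence arranged by the pull-tight and almost-minimizing steps. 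Finally, for the $c$-convexity of $\gamma$ to $R$: on each regular arc, the one-sided minimizing property for $\mathcal{F}_c$ with respect to deformations into $R$ forces the curvature relative to $R$ to be $\geq c$, and at each junction the same one-sided comparison (pushing the junction into $R$ along an admissible test deformation) forces the interior angles measured from $R$ to lie in $(0,\pi)$. This gives item (4), completing the sketch.
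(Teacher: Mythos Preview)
Your sketch is essentially correct and follows the same route as the paper, which also presents the result as a summary of the Zhou--Zhu argument rather than a self-contained proof. Two small differences are worth flagging. First, the paper makes explicit a step you elide: the combinatorial/Pitts argument only yields the almost-minimizing property in sufficiently small \emph{annuli} centered at each point, not in balls. The paper then uses a finite subcover of the balls $B_{r_{am}(p)}(p)$ to conclude that, away from a finite set $p_1,\dots,p_k$ (the centers of the subcover), $V$ is almost minimizing in small balls; only then does the regularity theory of \cite{ZZ2} apply to give smooth curvature $\pm c$ arcs off that finite set. You jump directly from annuli-almost-minimizing to the regularity conclusion, which hides exactly where the finite singular set comes from. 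Second, for the $c$-convexity in item (4) you argue via one-sided almost-minimizing comparisons, whereas the paper simply says it follows because $V=\partial R$ is a critical point of $\mathcal{F}_c$; both are acceptable at this level of sketch, and the tangent-cone balancing (item (3)) is derived in the paper from the $c$-bounded first variation, which is equivalent to what you call stationarity here.
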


\begin{proof}
It is shown in \cite{ZZ} that there exists a min-max sequence 
of boundaries $ \partial R_i$
converging to a varifold $V$ such that

\begin{itemize}
    \item $V$ has $c$-bounded first variation;
    
    \item For each $p\in\Sigma$, the set $V$ is  almost minimizing inside every proper annulus centered about $p$ of outer radius at most $r_{am}(p)>0$.
\end{itemize}

The fact that $V$ has $c$-bounded first variation implies that at each point $p$
any tangent cone at $p$ consists of half-rays (possibly with multiplicities) meeting at the origin summing to zero.

By a simple covering argument\footnote{Indeed, by compactness of $\Sigma$ the covering $\cup_{p\in\Sigma} B_{r_{am}}(p)$ has a finite subcover $\cup_{p\in\{p_1,...p_k\}} B_{r_{am}}(p)$ and in any small ball disjoint from $\{p_1,...p_k\}$ the almost minimizing property holds.} the almost minimizing property in annuli implies that, away from finitely many points $p_1,...p_k$, the varifold $V$ is almost minimizing in any small enough ball (where the radius of the ball depends on the center point $p$ and it may approach $0$ as $p$ converges to one of the points $p_i$). 

It is shown in \cite{ZZ2} that $V$
consists of smooth quasi-embedded arcs with multiplicity $1$ away from $p_1,...p_k$.  

Thus $V$ is a constant curvature network and is also a boundary of a region $R\subset\Sigma$. Since it is a critical point of the $\mathcal{F}_c$ functional, $V=\partial R$ is $c$-convex to $R$.

\end{proof}

\section{Maximum Principle and Constrained minimization}
Let us recall the maximum principle we will need.
Given a parameterized piecewise smooth curve $\gamma$ 
%on an oriented surface 
bounding a region $R$ let
$k_-(\gamma,s) = \lim_{\tau \rightarrow s^-} k(\gamma,\tau)$,
$k_-(\gamma,s) = \lim_{\tau \rightarrow s^+} k(\gamma,\tau)$
and let $\theta(\gamma,s)$ denote the interior angle (with respect 
to $R$) between the limiting tangent 
on the left and on the right at $\gamma(s)$.

\begin{lemma}[Maximum Principle]\label{maxprinciple}
Let $U_1$ and $U_2$ be two regions with
piecewise smooth boundaries $\gamma_1 = \partial U_1$
and $\gamma_2 = \partial U_2$. Suppose
$U_1 \subset U_2$ and $\gamma_1(s) = \gamma_2(s) \in \partial U_1 \cap \partial U_2$.
Then 

1. $\theta(\gamma_1,s) \leq \theta(\gamma_2,s)$;

2. if $\theta(\gamma_1,s) = \theta(\gamma_2,s)$ then
$k_-(\gamma_1,s) \geq k_-(\gamma_2,s)$ and
$k_+(\gamma_1,s) \geq k_+(\gamma_2,s)$.
%Let $U$ be a ball in a Riemannian manifold and suppose $\Sigma_1$ and $\Sigma_2$ have boundaries in $\partial U$ and $\Sigma_1$ has positive curvature $c_1(x)$ to the region $R_1$ and  $\Sigma_2$ has positive curvature $c_2(x)$
%to $R_2$.  Suppose $\Sigma_2$ is contained in $U\setminus R_1$ and $\Sigma_1$ and $\Sigma_2$ coincide at a point $p$.   Then $c_2(p)\leq c_1(p)$ and if $c_2(p)= c_1(p)$ then $\Sigma_1$ and $\Sigma_2$ coincide in $U$.  
\end{lemma}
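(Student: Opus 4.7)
The idea is to reduce both parts of the lemma to purely local, essentially Euclidean, comparison statements at the common point $p=\gamma_1(s)=\gamma_2(s)$. I would work in a geodesic normal coordinate chart centered at $p$, so that the metric is Euclidean up to second order; one-sided tangent directions, interior angles, and curvature limits of the piecewise smooth boundaries at $p$ then agree with their planar Euclidean counterparts, and the problem becomes elementary two-dimensional geometry.

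For part (1), the approach is to pass to tangent cones. Let $T_pU_i\subset T_p\Sigma$ denote the tangent cone of $U_i$ at $p$. Because $\gamma_i=\partial U_i$ is piecewise smooth with well-defined left and right tangent vectors at $s$, the set $T_pU_i$ is exactly the closed planar sector bounded by those two rays on the $U_i$-side, and its opening angle is $\theta(\gamma_i,s)$. The inclusion $U_1\subset U_2$ passes to tangent cones, giving $T_pU_1\subset T_pU_2$, so the opening angle of the inner sector cannot exceed that of the outer. This yields $\theta(\gamma_1,s)\le\theta(\gamma_2,s)$. The same description works uniformly for all $\theta\in(0,2\pi)$ once $T_pU_i$ is interpreted as the set of directions along which $U_i$ accumulates at $p$, which correctly treats reflex corners with $\theta>\pi$.

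For part (2), assume the two interior angles coincide. Then the sectors $T_pU_1$ and $T_pU_2$ also coincide, so the left-tangent (resp.\ right-tangent) vectors of $\gamma_1$ and $\gamma_2$ at $s$ agree, after parametrising so that $U_i$ lies on the same side. On the ``left'' arc at $p$, each $\gamma_i$ is smooth up to $p$ with a common unit tangent there, so I would express both as graphs $y=f_i(x)$ in coordinates with the $x$-axis along this shared tangent direction and the positive $y$-axis pointing into the interior. Locally $U_i=\{y>f_i(x)\}$, so $U_1\subset U_2$ forces $f_1(x)\ge f_2(x)$ for $x$ in a small left neighborhood of $0$, while $f_1(0)=f_2(0)=0$ and $f_1'(0)=f_2'(0)=0$. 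The function $f_1-f_2$ thus attains a local minimum at $0$, so $f_1''(0)\ge f_2''(0)$, and this is precisely $k_-(\gamma_1,s)\ge k_-(\gamma_2,s)$ under the standard sign convention (curvature relative to $R$ equals $f''(0)$ at a point where the graph has horizontal tangent and the interior lies above). The identical argument on the right arcs yields $k_+(\gamma_1,s)\ge k_+(\gamma_2,s)$.

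The hard part is bookkeeping rather than substance: reconciling the sign convention ``curvature relative to $R$'' with the graph inequality on each side of $p$, and ensuring the tangent-cone identification goes through uniformly for all $\theta\in(0,2\pi)$, in particular at reflex corners where $T_pU_i$ is the complement of a wedge. Once the normal coordinate chart and these conventions are pinned down, the remainder is the textbook infinitesimal strong maximum principle applied separately on each side of the (possibly cornered) contact point.
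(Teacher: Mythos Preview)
Your proof plan is correct. The paper actually states this lemma without proof, treating it as an elementary maximum-principle fact, so there is no argument in the paper to compare against; your tangent-cone reduction for part~(1) and the one-sided graph/Taylor comparison for part~(2) are precisely the standard justifications one would supply. The only point to be slightly careful about in writing it up is that in part~(2) the inequality $f_1\ge f_2$ holds only on a \emph{one-sided} neighborhood of $0$, so rather than saying ``$f_1-f_2$ has a local minimum'' you should invoke the one-sided second-order Taylor expansion (valid since each arc is smooth up to $p$) to get $f_1''(0^-)\ge f_2''(0^-)$; this is presumably what you had in mind and is straightforward.
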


Let us also record the first variation formula for $\mathcal{F}_c$. If $\gamma(s)$ is closed embedded curve bounding a region $R$ with normal $n(s)$ (pointing inward to $R$) and $\phi(s)$ is a function along $\gamma(s)$, we have

\begin{equation}\label{firstvar}
\frac{d}{dt}\bigg |_{t=0}\mathcal{F}_c(\gamma+tn\phi)=-\int_\gamma \phi(s)(k(s)-c)ds.
\end{equation}

To extend sweep-outs we will also need: 

\begin{lemma}[Constrained Minimization]\label{constrained}
Let $\Sigma$ be a closed Riemannian surface and 
$R \subset \Sigma$ be an open set with piecewise smooth boundary. 

I. Suppose
$\partial R$ is $c$-convex to $R$ and has at least one point of curvature greater than $c$. Then there exists a family of sets $\{ P_t \subset R \}$,
such that $\F_c(\partial P_t) \leq \F_c(\partial R)$ and 
$P_1$ satisfies the following:

1. $P_1 = \emptyset$ or $\partial P_1$ is an embedded closed curve with 
constant curvature $c$;

2. $\partial P_1$ is contained in the interior of $R$;

3. $\partial P_1$ is a local minimum of $\F_c$.

II. Suppose
$\partial R$ is $c$-concave to $R$ and has at least one point of curvature less than $c$. Then there exists a family of sets $\{ U_t \supset R \}$,
such that $\F_c(\partial U_t) \leq \F_c(\partial R)$ and 
$U_1$ satisfies the following:

1. $U_1 = \Sigma$ or $\partial U_1$ is an embedded closed curve with 
constant curvature $c$;

2. $\partial U_1$ is contained in the interior of $\Sigma \setminus R$;

2. $\partial U_1$ is a local minimum of $\F_c$.
\end{lemma}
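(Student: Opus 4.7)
The plan is to construct $P_1$ by constrained minimization of $\mathcal{F}_c$ among subsets of $R$ and to build the family $\{P_t\}$ by concatenating an initial inward deformation with a continuous interpolation. I describe Part I; Part II follows by the evident duality (replacing $R$ with its complement and reversing signs).

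First I would use the point $p_0 \in \partial R$ with $k(p_0)>c$ to construct an initial inward deformation. In a small neighborhood $V$ of $p_0$ on which $k>c$ along the smooth arc of $\partial R$ through $p_0$, the first variation formula \eqref{firstvar} shows that pushing $\partial R$ inward along a nonnegative bump supported in $V$ strictly decreases $\mathcal{F}_c$. (Corners of $\partial R$ can be rounded off in advance by inserting a short arc of curvature $c$, which lowers $\mathcal{F}_c$ to leading order in the corner size since the interior angles are $\leq\pi$.) This yields a continuous family $\{R_t\}_{t\in[0,1/2]}$ with $R_0=R$, $R_{1/2}\subsetneq R$ compactly contained near $p_0$, and $t\mapsto\mathcal{F}_c(\partial R_t)$ non-increasing.

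Next I would solve the constrained variational problem
\[
m := \inf\{\mathcal{F}_c(\partial P) : P\subset R\text{ open, finite perimeter}\}.
\]
Existence of a minimizer $P_1$ follows from $BV$ compactness on the bounded region $R$ together with lower semicontinuity of perimeter; the area term is continuous under $L^1$ convergence of indicators. Applying \eqref{firstvar} with variations supported in the interior of $R$ forces $\partial P_1\cap\mathrm{int}(R)$ to be smooth with curvature exactly $c$; standard regularity upgrades this to smoothness.

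The main obstacle is ruling out contact $\partial P_1\cap\partial R\neq\emptyset$, which is also what is needed to upgrade $\partial P_1$ to a local minimum of the \emph{unconstrained} $\mathcal{F}_c$. Suppose $p$ is a contact point. Since $P_1\subset R$, Lemma \ref{maxprinciple} forces either a strict inequality of interior angles, or, when the tangent cones align, $k_{\partial R}(p)\leq k_{\partial P_1}(p)=c$; combined with the $c$-convexity hypothesis this forces equal tangents and $k_{\partial R}(p)=c$. Uniqueness of solutions to the constant-curvature ODE then makes $\partial P_1$ and $\partial R$ coincide on the entire smooth arc of $\partial R$ through $p$, and propagation through corners via the equality case of Lemma \ref{maxprinciple} exhibits a full connected component of $\partial R$ on which $k_{\partial R}\equiv c$; this contradicts the existence of $p_0$ (the case in which $p_0$ lies on a different component of $\partial R$ is handled by iterating the procedure separately on each component of $R$). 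Hence $\partial P_1\subset\mathrm{int}(R)$, so $\partial P_1$ is an embedded curve of curvature $c$ that is a local minimum of $\mathcal{F}_c$ since small perturbations remain inside $R$. Concatenating $\{R_t\}_{t\in[0,1/2]}$ with a continuous $\mathcal{F}_c$-decreasing interpolation from $R_{1/2}$ to $P_1$ (built, for instance, via a discrete minimization scheme for $\mathcal{F}_c$ inside $R$ whose consecutive steps are joined by the normal deformations realizing the $\mathcal{F}_c$-decrease) produces the required family $\{P_t\}_{t\in[0,1]}$.
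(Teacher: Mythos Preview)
Your overall strategy matches the paper's---push in near the point of excess curvature, minimize $\mathcal{F}_c$ among subsets of $R$, and use the Maximum Principle to force the minimizer off $\partial R$---but the paper uses one device you do not, and it is precisely the device that covers the step where your argument is weakest.

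You minimize $\mathcal{F}_c$ over \emph{all} finite-perimeter $P\subset R$, obtain a minimizer $P_1$, and then propose to connect $R_{1/2}$ to $P_1$ by ``a discrete minimization scheme \ldots\ whose consecutive steps are joined by the normal deformations realizing the $\mathcal{F}_c$-decrease.'' This interpolation is the real content of the lemma and is not supplied: a minimizing sequence need not converge to your global minimizer, consecutive terms of a discrete scheme need not be close enough to be joined by a normal graph, and nothing guarantees $\mathcal{F}_c\le\mathcal{F}_c(\partial R)$ along the joins. The paper sidesteps this by building the homotopy into the minimization class: after pushing in to $R'$ with $\mathcal{F}_c(\partial R')<\mathcal{F}_c(\partial R)-\epsilon$, it minimizes over the set $\mathcal{U}$ of $P\subset R$ \emph{reachable from $R'$ by a homotopy along which} $\mathcal{F}_c\le\mathcal{F}_c(\partial R')+\epsilon/2$. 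A minimizer in $\mathcal{U}$ then comes automatically equipped with the required family, while still being a local $\mathcal{F}_c$-minimizer among subsets of $R$, so the regularity and Maximum Principle arguments apply unchanged.

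Two smaller points. First, you assert $\partial P_1$ is embedded but never exclude self-touching in the interior of $R$; the paper does this by noting that rounding the two corners at a self-touching point into $P_1$ increases area and decreases length, hence strictly lowers $\mathcal{F}_c$. Second, your contact argument invokes ``uniqueness of solutions to the constant-curvature ODE'' to force $\partial P_1$ and $\partial R$ to coincide along an arc, but $\partial R$ is only assumed to have curvature $\ge c$, not $=c$, so ODE uniqueness does not directly apply; the paper instead takes an arc of $\partial P_1$ whose interior lies in $\mathrm{int}(R)$ with an endpoint on $\partial R$ and derives a contradiction from the Maximum Principle at that endpoint.
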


\begin{proof}
We prove I. The proof of II is analogous.

Since there exists a point $ p \in \partial R$ with curvature
$k(p)>c$ it follows from the first variation formula \eqref{firstvar} that we can deform an arc of $\partial R$ near $p$
to obtain a new set $R' \subset R$ with $\F_c(\partial R') < \F_c(\partial R) - \epsilon$ for some small $\epsilon>0$.  Moreover, at all times along the deformation the $\mathcal{F}_c$ functional stays below $\mathcal{F}_c(\partial R)$.

Let $\mathcal{U}$ denote the set of open subsets 
of bounded perimeter $P \subset R$, such that 
there exists a homotopy $P_t$ from $R'$ to $P$
with $\F_c(\partial P_t) \leq \F_c(\partial R') + \epsilon/2$.
Note that the set $\mathcal{U}$ is closed and non-empty.

Let $P_1 \in \mathcal{U}$ denote a set with $\F_c(P_1) = \inf_{P \in \mathcal{U}} \F_c(P)$.
By the regularity theory for minimizers of $\F_c$ (Theorem 2.14 and Proposition 5.8 in \cite{ZZ})
the curve $\partial P_1$ is piecewise smooth and has constant curvature $c$ in the interior of $R$.  
If $\partial P_1$ had a self-intersection or a self-touching point $x$ in the interior
of $R$, then since rounding off the corners at $x$ increases area and decreases length, we can locally desingularize $\partial P_1$ to deform it into a 
smooth curve $\partial P'$ with $P' \subset R$
and $\F_c(\partial P')< \F_c (\partial P_1) $.
Hence, all arcs of $\partial P_1$ in the interior of $R$ must be smooth and embedded.

Suppose $\partial P_1 \cap \partial R \neq \emptyset$.
Note that we can not have $P_1 = R$ since 
there exists a point $ p \in \partial R$ with curvature
$k(p)>c$.
Let $a(s)$ be an arc of $\partial P_1$ whose interior is
contained in the interior of $R$ and with an endpoint $a(0) \in \partial R$.
By the Maximum Principle Lemma \ref{maxprinciple} 
we have that either $a$ meets $\partial R$ at an angle or
$k_+(0) \geq c$.
If the intersection is not tangential or $k_+>c$ then we can decrease the functional $\F_c$ by deforming $\partial P_1$.
If $k_+(0) = c$ then either $a(t)$ coincides with $\partial R$ for some
$t \in (0,\epsilon)$ or we have $k(t)>c$ for some $t \in (0,\epsilon)$.
Both possibilities contradict our choice of $a$.
Hence, $\partial P_1$ must lie in the interior of $R$.   
\end{proof}

\section{Rounding off Corners}
In this section we verify that we can round off corners of a $c$-convex set with a corner while both decreasing the functional $\mathcal{F}_c$ and increasing the curvature.  Rounding off corners certainly brings down the length, but it also decreases the area, and thus one has to look more carefully at its effect on the $\mathcal{F}_c$ functional.  But since the change in area is of smaller order than that in length, the $\mathcal{F}_c$ functional also goes down.  Indeed, after a blow-up argument the area term disappears and we are reduced to considering only the length functional.  

Alternatively, one can also apply curve shortening flow for short time as it will instantaneously smooth the curve and increase curvature (cf. Section \ref{sectioncflow}).

We need the following lemma used in the blow-up argument.  If $B_r(p)$ is a ball in a Riemannian surface with $r$ less than injectivity radius, define the dilation map 
\begin{equation}
d_r: B_r(x)\rightarrow \mathcal{B}_1\subset \mathbb{R}^2
\end{equation}
to be $d_r(y) = (\exp^{-1}_x(y))/r$.   For each $c>0$ let $\tilde{\mathcal{F}}_{c}$ denote the $\mathcal{F}_{c}$ functional on $\mathcal{B}_1$. 
\begin{lemma}[Rescaling of $\mathcal{F}_c$]\label{rescaling}
For $r$ small enough, there exists $\alpha(r)>0$ so that for all relative cycles $\Gamma$ in $B_r(p)$ bounding a region in $B_r(p)$ there holds:
\begin{equation}\label{rescaled}
(1-\alpha(r))\mathcal{F}_c(\Gamma)\leq  r \tilde{\mathcal{F}}_{rc}(d_r(\Gamma)) \leq   (1+\alpha(r)) \mathcal{F}_c(\Gamma), 
\end{equation}
where $\alpha(r)\rightarrow 0$ as $r\rightarrow 0$.  
\end{lemma}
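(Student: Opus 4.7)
The plan is to pass to normal coordinates at $p$, use the standard expansion of the metric to compute how length and area transform under the dilation $d_r$, and then convert the resulting additive error into the multiplicative bound on $\mathcal{F}_c$ via the isoperimetric inequality on small balls.

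First, I would fix normal coordinates on $B_r(p)$ (valid for $r$ smaller than the injectivity radius at $p$). In these coordinates $g_{ij}(v)=\delta_{ij}+O(|v|^{2})$ and $\sqrt{\det g}(v)=1+O(|v|^{2})$, with implicit constants controlled by the curvature of $\Sigma$ near $p$. Writing the inverse dilation as $\phi(z)=\exp_{p}(rz)\colon\mathcal{B}_1\to B_r(p)$ and combining the Gauss Lemma with the Jacobi-field expansion of $d\exp_{p}$ gives, uniformly in curves $\gamma$ and regions $R\subset B_r(p)$,
\begin{equation*}
\mathrm{Length}_g(\gamma)=r(1+\eps_L(r))\,\mathrm{Length}_{\mathrm{Euc}}(d_r\gamma),\qquad \mathrm{Area}_g(R)=r^{2}(1+\eps_A(r))\,\mathrm{Area}_{\mathrm{Euc}}(d_rR),
\end{equation*}
with $|\eps_L(r)|,|\eps_A(r)|\leq Cr^{2}$.

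Substituting these into the identity $r\tilde{\mathcal{F}}_{rc}(d_r\Gamma)=r\,\mathrm{Length}_{\mathrm{Euc}}(d_r\Gamma)-r^{2}c\,\mathrm{Area}_{\mathrm{Euc}}(d_rR)$ yields
\begin{equation*}
r\tilde{\mathcal{F}}_{rc}(d_r\Gamma)=\mathcal{F}_c(\Gamma)+E(r,\Gamma),\qquad |E(r,\Gamma)|\leq C'r^{2}\bigl(\mathrm{Length}_g(\Gamma)+c\,\mathrm{Area}_g(R)\bigr),
\end{equation*}
which is the additive form of the claim. To promote this to the multiplicative bound \eqref{rescaled} I would invoke the isoperimetric inequality, valid up to a $1+o(1)$ factor because the metric is $C^{2}$-close to flat on $B_r(p)$: $\mathrm{Length}_g(\Gamma)\geq 2\sqrt{\pi\,\mathrm{Area}_g(R)}\,(1-o(1))$, together with the crude bound $\mathrm{Area}_g(R)\leq\pi r^{2}(1+o(1))$. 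For $r$ small enough (depending only on $c$), these together force $c\,\mathrm{Area}_g(R)\leq\tfrac{1}{2}\mathrm{Length}_g(\Gamma)$, hence $\mathrm{Length}_g(\Gamma)+c\,\mathrm{Area}_g(R)\leq 3\,\mathcal{F}_c(\Gamma)$; feeding this back into the additive bound gives \eqref{rescaled} with $\alpha(r)=3C'r^{2}\to 0$.

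The main technical point is precisely this last step: the Taylor expansion naturally produces a symmetric error proportional to $L+cA$, whereas the conclusion controls the error by $\mathcal{F}_c=L-cA$, which could a priori be much smaller. The isoperimetric inequality on the small ball is exactly what is needed to guarantee that on this scale the two combinations are comparable. All other ingredients --- the expansion of the metric, the scaling of length and area, and the substitution --- are routine once normal coordinates are in hand.
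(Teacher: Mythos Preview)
Your argument is correct and follows the same route as the paper: compare length and area separately under the dilation using the metric expansion in normal coordinates, then combine. The paper simply records the separate multiplicative comparisons for length and area and asserts that the claim follows by combining them; your explicit conversion of the resulting additive error (controlled by $L+cA$) into the multiplicative bound on $\mathcal{F}_c=L-cA$ via the isoperimetric inequality on small balls makes precise a step the paper leaves implicit.
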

\begin{proof}
Letting $\tilde{L}$ and $\tilde{A}$ denote the length and area on $\mathcal{B}_1(0)$ we have from the definitions 
\begin{equation}\label{resc}
r\tilde{\mathcal{F}}_{cr}(d(\Gamma))= r\tilde{L}(d(\Gamma))-cr^2\tilde{A}(d(\Gamma)).
\end{equation}
Letting $L$ denote length, and $A$ area on $B_r(p)$ we have: 
\begin{equation}\label{resc2}
(1-\beta(r)) L(\Gamma)\leq r\tilde{L}(d(\Gamma))\leq  (1+\beta(r)) L(\Gamma)
\end{equation}
and 
\begin{equation}\label{resc3}
(1-\gamma(r)) A(\Gamma)\leq r^2\tilde{A}(d(\Gamma))\leq  (1+\gamma(r)) A(\Gamma),
\end{equation}
where $\beta(r)$ and $\gamma(r)$ approach $0$ as $r\rightarrow 0$.  
Combining \eqref{resc}, \eqref{resc2} and \eqref{resc3} the claim follows. 
\end{proof}

\begin{proposition}[Rounding off Corners and Decreasing $\mathcal{F}_c$]\label{corners}
Let $B_r(p)$ be a ball in Riemannian surface $\Sigma$ of radius $r$ smaller than the injectivity radius of $\Sigma$.  Suppose two embedded curvature $c$ arcs $\{\alpha_1(t)\}_{t\in [0,1]}$ and $\{\alpha_2(t)\}_{t\in [0,1]}$ satisfy: 
\begin{enumerate}[label=(\roman*)]
\item $\alpha_1(1)=\alpha_2(1)=p$ and the two arcs meet at $p$ at angle $\theta<\pi$;
\item $\alpha_1\cap \alpha_2 = \{p\}$;
\item $\alpha_1(t)$ and $\alpha_2(t)$ are contained in the interior of $B_r(p)$ for $0<t\leq 1$;
\item  $\alpha_1(0)$ and $\alpha_2(0)$ are contained in $\partial B_p(r)$;
\item $\alpha_1\cup\alpha_2$ is $c$-convex to the acute wedge $W\subset B_r(p)$ between them
\end{enumerate} 

Let $\Gamma$ denote $\alpha_1\cup\alpha_2$.  Then for all $\epsilon>0$ small enough, there exists a curve $\Gamma_\epsilon$ with boundary in $\partial B_r(p)$ that is $c$-convex to a region $R_\epsilon$ so that 
\begin{enumerate}
\item $\Gamma_\epsilon$ coincides with $\alpha_1\cup\alpha_2$ at $\partial B_r(p)$;
\item $\Gamma_\epsilon\rightarrow \Gamma$ as $\epsilon\rightarrow 0$;
\item $\mathcal{F}_c(\Gamma_\epsilon)<\mathcal{F}_c(\Gamma)$;
\item $R_\epsilon\subset R_\delta$ whenever $\delta>\epsilon$;
\item $\Gamma_\epsilon$ is a $C^{1,1}$ curve that is smooth away from two points;
\item $\Gamma_\epsilon$ has curvature between $\frac{1}{2\epsilon}$ and $\frac{2}{\epsilon}$ at all points where it is smooth (in particular the curvature is greater than $c$).  
\end{enumerate}
\end{proposition}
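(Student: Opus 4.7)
The plan is to define $\Gamma_\epsilon$ by excising a small neighborhood of $p$ on $\alpha_1\cup\alpha_2$ and splicing in a single arc $\sigma_\epsilon$ of curvature of order $1/\epsilon$, tangent to $\alpha_1$ and $\alpha_2$ at the attachment points and curving into the wedge. Working in geodesic normal coordinates at $p$, for each small $\epsilon>0$ I would pick attachment points $q_i^\epsilon\in\alpha_i$ at distance of order $\epsilon$ from $p$ and construct $\sigma_\epsilon$ with prescribed curvature in $[1/(2\epsilon),2/\epsilon]$ joining $q_1^\epsilon$ to $q_2^\epsilon$ tangentially. Existence and smooth dependence on $\epsilon$ follow from the implicit function theorem, since the corner angle $\theta$ is bounded away from $\pi$ by hypothesis (i).

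Conclusions (5) and (6) are then immediate: tangential matching gives $C^1$ at each $q_i^\epsilon$, the uniform curvature bound on $\sigma_\epsilon$ gives $C^{1,1}$, and the curvature jump at the two attachment points accounts for the two non-smooth points. For (1), $\Gamma_\epsilon$ agrees with $\alpha_1\cup\alpha_2$ outside a small neighborhood of $p$, in particular on $\partial B_r(p)$. For (2), as $\epsilon\to 0$ the splicing region shrinks to $p$. For $c$-convexity to $R_\epsilon$: on the unchanged portions of $\alpha_i$ this is hypothesis (v), and on $\sigma_\epsilon$ the curvature is at least $1/(2\epsilon)>c$ and points into the wedge $W\supset R_\epsilon$. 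The nesting property (4) requires arranging the $\sigma_\epsilon$ in a monotone one-parameter family, which is a routine continuity argument.

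The substantive point is (3), $\mathcal{F}_c(\Gamma_\epsilon)<\mathcal{F}_c(\Gamma)$, and my plan is a blow-up via Lemma \ref{rescaling}. Choose $r$ of order $\epsilon$, say $r=K\epsilon$ for $K$ large enough that the splicing region is contained in $B_r(p)$. In the dilated picture $d_r(B_r(p))=\mathcal{B}_1$, the arcs $d_r(\alpha_i)$ have curvature $rc\to 0$ and converge in $C^1$ to two straight rays meeting at angle $\theta$, while $d_r(\sigma_\epsilon)$ has uniformly bounded curvature. Since the area term in $\tilde{\mathcal{F}}_{rc}=\tilde L-rc\,\tilde A$ carries the factor $rc\to 0$, the comparison reduces in the limit to pure Euclidean length: the inscribed circular arc of radius $\rho$ has length $\rho(\pi-\theta)$, while the two cut-off tangent segments have total length $2\rho\cot(\theta/2)$, and $2\cot(\theta/2)>\pi-\theta$ for all $\theta\in(0,\pi)$ (check: $g(\theta):=2\cot(\theta/2)-(\pi-\theta)$ satisfies $g(\pi)=0$ and $g'(\theta)=-\cot^2(\theta/2)<0$). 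This yields a definite length decrease of order $1$ in the rescaled picture; Lemma \ref{rescaling} then gives $\mathcal{F}_c(\Gamma_\epsilon)-\mathcal{F}_c(\Gamma)\leq -c_0\epsilon+O(\epsilon^2)$ for some $c_0=c_0(\theta,K)>0$, which is strictly negative for $\epsilon$ small.

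The main obstacle is constructing $\sigma_\epsilon$ with prescribed high curvature and exact tangential matching in the Riemannian (rather than Euclidean) setting; this is ODE perturbation and nothing more, but it is the one step that resists the clean Euclidean reduction, and one must also verify continuous dependence on $\epsilon$ to obtain the nested family in (4). An appealing alternative, indicated at the start of the section, is to bypass the explicit construction and instead run curve-shortening flow on $\Gamma$ for a short time $t$: the flow instantaneously smooths the corner at $p$, produces curvature of order $1/\sqrt{t}$ near the former vertex, and is strictly length-decreasing, so $\mathcal{F}_c$ strictly decreases as well; tuning $t$ of order $\epsilon^2$ recovers the claimed properties.
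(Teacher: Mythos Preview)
Your approach is essentially the same as the paper's: inscribe a high-curvature arc tangent to both $\alpha_i$ near the vertex, and verify (3) by blowing up via Lemma~\ref{rescaling} so that the area term disappears and one is left with a Euclidean length comparison between a circular arc and two tangent segments.

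Two small differences are worth flagging. First, the paper gives a more concrete construction of the rounding arc that avoids any implicit function theorem: it takes the $\epsilon$-tubular neighborhoods $T_1,T_2$ of $\alpha_1,\alpha_2$, lets $p_\epsilon$ be the unique point of $\partial T_1\cap\partial T_2\cap W$, and uses the \emph{geodesic circle} $C_\epsilon$ of radius $\epsilon$ centered at $p_\epsilon$; this circle is automatically tangent to both $\alpha_i$ and has curvature $1/\epsilon+O(\epsilon)$, so (5), (6) and (4) come for free from the monotone dependence of $p_\epsilon$ on $\epsilon$. Second, the paper runs the blow-up argument by contradiction (assume $\mathcal{F}_c(\Gamma_{\epsilon_i})\ge\mathcal{F}_c(\Gamma)$ along a sequence, rescale by $\delta_i=\operatorname{dist}(p,x_{\epsilon_i})$, pass to the limit, and obtain $\operatorname{Length}(A)\ge 2$), whereas you extract a direct quantitative bound $\mathcal{F}_c(\Gamma_\epsilon)-\mathcal{F}_c(\Gamma)\le -c_0\epsilon+O(\epsilon^2)$. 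Your explicit verification $2\cot(\theta/2)>\pi-\theta$ is a clean way to nail down the Euclidean comparison that the paper leaves implicit.
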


\begin{proof}
For each $\epsilon>0$, consider the the $\epsilon$-tubular neighborhood $T_1$ about $\alpha_1$ and $T_2$ the $\epsilon$-neighborhood about $\alpha_2$.  When $\epsilon$ is small enough, we have that $\partial T_1\cap\partial T_2\cap W$ is singular at a unique point $p_\epsilon$.  The circle $C_\epsilon$ of radius $\epsilon$ about $p_\epsilon$ lies in $W$ and is tangent to $\alpha_1$ at $x_\epsilon$ and tangent to $\alpha_2$ at $y_\epsilon$ and has curvature $1/\epsilon+O(\epsilon).$  

We construct the curve $\Gamma_\epsilon$ by removing from $\alpha_1\cup\alpha_2$ the sub-arc beginning at $x_\epsilon$ and going to $p$, as well as the sub-arc beginning at $y_\epsilon$ and going to $p$ and adding in their place the segment of $C_\epsilon$ so that the resulting curve $\Gamma_\epsilon$ is $C^{1,1}$ over $x_\epsilon$ and $y_\epsilon$.  

It remains to verify (3).  Suppose toward a contradiction that there exists $\epsilon_i\rightarrow 0$ so that
\begin{equation}\label{bad}
\mathcal{F}_c(\Gamma_{\epsilon_i})\geq \mathcal{F}_c(\Gamma).
\end{equation}
Suppose without loss of generality that $\delta_i:=\mbox{dist}_\Sigma(p,x_{\epsilon_i})\geq \mbox{dist}_\Sigma(p,y_{\epsilon_i})$ for some subsequence of $i$.  
Note that there exists $c>0$ and $C>0$ so that 
\begin{equation}\label{control}
c\epsilon_i < \delta_i < C\epsilon_i
\end{equation}
and 
\begin{equation}\label{distbounded}
\frac{\mbox{dist}_\Sigma(p,x_{\epsilon_i})}{\mbox{dist}_\Sigma(p,y_{\epsilon_i})} \rightarrow 1.
\end{equation}

Let us consider the family of dilations $d_{\delta_i}$ about $p$ and the corresponding curves $\tilde{\Gamma}_i:=d_{1/\delta_i}(\Gamma_{\epsilon_i})$ as well as $\tilde{\Omega}_i:=d_{1/\delta_i}(\Gamma)$.  The curves $\tilde{\Gamma}_i$ and $\tilde{\Omega}_i$ are defined on the ball of radius $1$ $\mathcal{B}_1 \subset \mathbb{R}^2$.  

 The curves $\tilde{\Omega}_i$ have bounded curvature and thus converge to a set consisting of two rays $r_1$ and $r_2$ in $\mathcal{B}_1$ meeting at angle $\theta<\pi$ at the origin and extending to $\partial \mathcal{B}_1$.  Let us denote by $a$ the point $r_1\cap \partial \mathcal{B}_1$ and by $b$ the point $r_2\cap \partial \mathcal{B}_1$.
  
 The curves $\tilde{\Gamma}_i$ on the other hand consist of two pieces.  The circular part has curvature $\delta_i/\epsilon_i+O(\delta_i)$ and thus in light of \eqref{control} after passing to a subsequence they converge to an arc $A$ of a circle of positive curvature.  The arc $A$ contains the point $a$ by the choice of $\delta_i$ and because of \eqref{distbounded} also the point $b$.  In light of \eqref{distbounded} the other components of $\tilde{\Gamma}_i$ do not contribute to the limit. 
  
 %The other piece of $\tilde{\Gamma}_i$ is a subset of the curves comprising $\tilde{\Omega}_i$ and thus converges to some sub-arc $r'_2\subset r_2$.  One can see that that $r'_2$ is a proper sub-arc by \eqref{distbounded}.  Let $\Gamma_\infty$ denote the limit $\tilde{\Gamma}_i$ as $i\rightarrow\infty$.  The set $\Gamma_\infty$ consists of $r'_2$ and $A$.  
 
 Because of the Rescaling Lemma \ref{rescaling} and \eqref{bad} we obtain
 \begin{equation}\label{impossible}
 \mbox{Length}(A)\geq \mbox{Length}(r_1\cup r_2)=2.  
 \end{equation}  
 
 But \eqref{impossible} implies that a circular arc $A$ in $\mathbb{R}^2$ has larger length than the two line segments making a corner at $0$.  This is a contradiction and thus (3) holds.  
\end{proof}

\section{Proof of Main Theorem}
\begin{theorem}[Existence of $C^{1,1}$ curvature $\pm c$ curves] \label{main3}
Let $(\Sigma, g)$ be a closed 
%orientable 
Riemannian surface.  For each $0< c< \infty$ there exists a closed curve $\gamma_c$ such that one of the following holds:
\begin{enumerate}
\item $\gamma_c$ is smooth, quasi-embedded with curvature equal to $c$ 
\item There exists $p\in\Sigma$ such that in  $\Sigma\setminus\{p\}$ the curve $\gamma_c$ is smooth, quasi-embedded, with curvature equal to $\pm c$.  Moreover, the tangent cone of $\gamma_c$ at $p$ consists of two distinct lines and $\gamma_c$ is $C^{1,1}$ through $p$. 
\end{enumerate}
\end{theorem}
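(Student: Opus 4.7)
The plan is to start with the constant curvature network provided by Theorem~\ref{main2} and iteratively improve its regularity using Proposition~\ref{corners} together with the constrained minimization of Lemma~\ref{constrained}. First I would apply Theorem~\ref{main2} to obtain a region $R_0$ with $\partial R_0=\gamma_0$ realizing the width $W_c$, $c$-convex to $R_0$, and smooth away from finitely many singular points $p_1,\dots,p_k$ whose tangent cones are collections of half-rays summing to zero. If $k=0$ the curve is already smooth and quasi-embedded with curvature~$c$, giving case~(1). Otherwise, by $c$-convexity every interior angle at each $p_i$ lies in $(0,\pi]$ and each arc of $\gamma_0$ carries curvature $+c$ relative to $R_0$.

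I would split the singular points into \emph{ordinary corners}, where some pair of adjacent half-rays subtends an interior angle strictly less than $\pi$ on the $R_0$ side so that Proposition~\ref{corners} applies to the adjacent wedge, and \emph{crossings}, where the tangent cone consists of two distinct lines and $R_0$ occupies two opposite sectors near $p_i$. At every ordinary corner I would invoke Proposition~\ref{corners} to replace $\gamma_0$ by a rounded curve $\Gamma$ with $\mathcal{F}_c(\Gamma)<W_c$, still $c$-convex to some region $R\subset R_0$, and with curvature strictly greater than~$c$ along each new circular arc. Lemma~\ref{constrained}(I) then produces a family $\{P_t\subset R\}$ with $\mathcal{F}_c(\partial P_t)<W_c$ terminating at a local minimum~$P_1$; if $P_1\neq\emptyset$ its boundary is a smooth embedded closed curve of curvature~$c$ and we are in case~(1).

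If instead $P_1=\emptyset$, I would run the dual procedure on the exterior: $\gamma_0$ is $c$-concave to $\Sigma\setminus R_0$ and every arc carries curvature $-c<c$, so Lemma~\ref{constrained}(II) yields a family $\{U_t\supset \Sigma\setminus R_0\}$ with $\mathcal{F}_c(\partial U_t)<W_c$ ending at $U_1$, again producing either a smooth curvature~$c$ curve (case~(1)) or $U_1=\Sigma$. If both sweeps collapse trivially, concatenating them through a small interpolation near $\gamma_0$ (using that the interior corner rounding strictly lowered $\mathcal{F}_c$) gives a full sweep-out of $\Sigma$ whose maximum of $\mathcal{F}_c$ is strictly below $W_c$, contradicting the min-max definition of $W_c$. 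Hence $\gamma_0$ must carry at least one crossing-type singular point to which Proposition~\ref{corners} cannot be applied without disconnecting the curve.

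The main obstacle is the final classification step: showing that exactly one crossing survives, that its tangent cone consists of precisely two distinct lines, and that the curvatures of the two transverse arcs meeting at $p$ have opposite signs so that the assembled curve is genuinely $C^{1,1}$ through~$p$ with curvature jumping between $+c$ and $-c$. I would establish this via the Maximum Principle (Lemma~\ref{maxprinciple}) together with iterated applications of Proposition~\ref{corners} and Lemma~\ref{constrained} to rule out the forbidden intersection patterns depicted in Figure~\ref{fig:c11} --- triple points, tangential touchings between oppositely curved arcs, and multiple transverse crossings --- each of which, if present, would permit an additional rounding or minimization cycle strictly decreasing $\mathcal{F}_c$ below $W_c$. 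Once this classification is in place, the resulting curve has the structure required by case~(2) and the proof is complete.
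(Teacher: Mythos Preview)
Your overall strategy is the paper's, but the decisive step is exactly the one you wave through: the ``small interpolation near $\gamma_0$'' joining the inward family (from $R_-$ to $\emptyset$) to the outward family (from $R_+$ to $\Sigma$). Any such interpolation must, wedge by wedge, undo the interior roundings and perform the exterior ones; done naively it passes through $R_0$ itself, where $\mathcal{F}_c=W_c$ on the nose, and you get no contradiction. The whole content of the paper's proof is an explicit \emph{staggering} of these local moves: at stage $i$ one restores the interior wedge $w_{2i+1}'$ back to $w_{2i+1}$ and only then rounds the adjacent exterior wedge, so that at every instant at least one gain $G(w_j)$ or $H(w_j)$ remains in the ledger and $\mathcal{F}_c$ stays strictly below $W_c$. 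This bookkeeping works if and only if there are at least two independent moves available, i.e.\ either $l\geq 2$ vertices or a single vertex with $m_1\geq 6$ arcs (hence $\geq 3$ wedges on each side). That counting argument is what singles out the sole surviving configuration $l=1$, $m_1=4$; with only two interior and two exterior wedges no staggering is possible, the interpolation is forced through $W_c$, and the contradiction fails --- which is precisely why case~(2) occurs.

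Two smaller points. Your dichotomy ``ordinary corners versus crossings'' is not the right one: by item~(3) of Theorem~\ref{main2} every singular vertex has an even number $m_i\geq 4$ of arcs with all consecutive interior angles strictly less than $\pi$, so Proposition~\ref{corners} applies at \emph{every} vertex, crossings included. The operative distinction is $m_i=4$ versus $m_i\geq 6$, together with the number $l$ of vertices. And the Maximum Principle is not the tool for the final classification; once the combinatorics above leave only a single $4$-valent crossing, the $C^{1,1}$ structure follows directly: a transverse pair of smooth $\pm c$ arcs through $p$ is not stationary for $\mathcal{F}_c$, so the curvature must flip sign along each branch and the parameterization is $C^{1,1}$ but not~$C^2$.
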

\begin{remark}
If we are in case (2) in the theorem, then the sign of the curvature changes as one passes through the $C^{1,1}$ point.  
\end{remark}

\begin{proof}
Without any loss of generality
we can assume that there are no smooth stable quasi-embedded curvature $c$ curves.  
Let $R$ be a region with $c$-convex boundary 
satisfying conclusions of 
 Theorem \ref{main2}.
 Let $\gamma = \partial R$.

If $R$ has smooth quasi-embedded (or embedded) boundary then we are done.
Hence, without loss of generality we may assume that there exist  points $p_1, ..., p_l$ with $m_1, ... , m_l$ arcs of $\gamma$ meeting at
each vertex for $m_i \geq 4$.
We claim that $l=1$ and $m_1=4$.

If not, we will construct a sweepout $\{ U_t \}$ of $\Sigma$
with $\F_c(\partial U_t)< \F_c(\partial R)$ for all $t$,
contradicting $\F_c(\partial R) = W_c$.

Consider a small ball $B$ centered at $p_1$ so that the set $\gamma\cap B$ consists of $m_1$ arcs $c_1,..., c_{m_1}$ meeting at $p_1$.   By the angle condition for stationary varifolds (item (3) in Theorem \ref{main2}) it follows that consecutive $c_i$ meet at angles strictly less than $\pi$.  For each $i=1,...m-1$ denote by $w^1_i$ the wedge in $B$ between $c_i$ and $c_{i+1}$ and denote by $w^1_{m_1}$ the wedge between $c_{m_1-1}$ and $c_1$.  Suppose without loss of generality that $w^1_1\cup w^1_3\cup...\cup w^1_{m_1-1}$ is equal to $B\cap R$. 

The construction of the sweepout $U_t$ proceeds as follows.
Let $\gamma_-^1$ denote the set obtained from $\gamma$
by replacing for each $i=1,3,...,m_1-1$ the arcs $c_i$ and $c_{i+1}$ of $\gamma$ near $p$ by a small arc
$a_i \subset R$ of curvature strictly greater than $c$.  One can do this provided the ball $B$ was chosen sufficiently small by Proposition \ref{corners}.\footnote{  Alternatively, one can apply the curve shortening flow for short time to make the curve smooth and with curvature strictly above $c$ at every point.}  In this way, the wedge $w^1_i$ has shrunk to a new set that we denote $w^1_i{}'$.   

Let $R_-^1 \subset R$ be the set bounded by $\gamma_-^1$.
Observe that when the arc is sufficiently small this 
deformation decreases the value of $\F_c$.  Denote by $G(w^1_i)=\mathcal{F}_c(\partial(R_-^1\cup w^1_i))-\mathcal{F}_c(\partial R_-^1)$.

Similarly, we modify $R_-^1$ in the neighbourhood
of $p_2$ to obtain region $R_-^2$.
Let $R_-$ denote the region we obtain after desingularizing
at every vertex. We have 
\begin{equation}\label{downgood}
\mathcal{F}_c(\partial R_-)=\mathcal{F}_c(\partial R)- \sum_{i \text{ odd},j} G(w_i ^j).
\end{equation}

Since we assumed that there are no stable embedded closed curves of
constant curvature $c$, 
by Lemma \ref{constrained} there exists a homotopy $R_t^-$
from $R_-$ to the empty set with $\F_c(\partial R_t^-) \leq \F_c (\partial R^-)$ for all $t$.

Similarly, we construct $R_+ \supset R$ by replacing for each $i=2,4,...,m_j$ the arcs $c_{i}, c_{i+1}$ (making a corner)
with arcs $b_i$ of curvature less than $0$ (curvature vector pointing outside of $R_+$).  The ``rounding off" of corners adds to area of $R$ and decreases the length of $\partial R$, and thus can be achieved with a homotopy strictly decreasing $\mathcal{F}_c$. Let us denote triangular region $(R_+\setminus R)\cap w_i^j$ by $t_i$. Let $H(w_i^j)=\mathcal{F}_c(\partial R)-\mathcal{F}_c(\partial (R\cup t_i))$.  

By Lemma \ref{constrained} there then exists a homotopy $R_t^+$
from $R_+$ to the set $\Sigma$ so that $\F_c(\partial R_t^+) \leq \F_c (\partial R^+)$ for all $t$.

To finish the construction of the desired sweepout we need to deform
$R_-$ into $R_+$ while keeping the value of functional $\F_c$ below $\F_c(\partial R)$.
Such a deformation exists when
the number of vertices $l \geq 2$ or when
$m_1\geq 6$ by bringing each component of $\partial R^+$ one at a time all the way up to $\partial R$ and desingularizing each time \footnote{See for instance Figure 3 in Calabi-Cao \cite{CC}.}.  Let us give more details.  

First we describe the deformation when $l=1$ and $m_1 \geq 6$. 
For convenience we drop the superscript $1$.

There is a homotopy supported in $w_1\cup w_3$ that brings $w'_1$ and $w'_3$ to $w_1$ and $w_3$, respectively (using the reverse of a localized piece of the ``rounding off" homotopy that was used to construct $R_-$ from $R$).  Since $m_1>4$ this homotopy has the property that the resulting region $R'$ satisfies (using \eqref{downgood})
\begin{equation}\label{downok}
\mathcal{F}_c(\partial R')=\mathcal{F}_c(\partial R)-G(w_5)-...-G(w_{m-1})< \mathcal{F}_c(\partial R).
\end{equation}

The new region $R'$ now contains $w_1$ and $w_3$ and the arcs $c_2$ and $c_3$ which bound the wedge $w_2$ in $\Sigma\setminus R$.  We then desingularize the corner between $c_2$ and $c_3$ (using a localized portion of the homotopy that we used in the construction of $R^-$) to homotope the region in $w_2$ to contain precisely $t_2$.  Thus we obtain a new region $R''$ so that 
\begin{equation}\label{downok2}
\mathcal{F}_c(\partial R'')=\mathcal{F}_c(\partial R')-H(w_2) < \mathcal{F}_c(\partial R').
\end{equation}

Note that in the course of moving between $R$ and $R''$ the largest the $\mathcal{F}_c$ functional has gotten is at $\partial R'$ satisfying \eqref{downok}.

Starting at $R_-$ we obtained a new region $R_2 :=R''$.  We will generate a sequence of increasing regions $R_-=R_1\subset R_2 \subset R_3 \subset...\subset R_{m/2-1}\subset R_{m/2} =R_+$ by iterating this procedure.  We have defined the first step of the iteration homotoping $R_1$ to $R_2$.  In the $i$th step, we define a homotopy $R_i(t)$ beginning at $R_i$ and ending at $R_{i+1}$ by first bringing $w'_{2i+1}$ to $w_{2i+1}$, and then adding in $t_{2i}$.  We thus have for $i=2,..,m/2-2$:

\begin{equation}\label{aa}
\mathcal{F}_c(\partial R_{i+1}) = \mathcal{F}_c(\partial R)-\sum^{m/2-2}_{j=i} G(w_{2j+3}) -\sum_{j=1}^i H(w_{2j}).
\end{equation}
while for all $0\leq t\leq 1$
we have 
\begin{equation}\label{bb}
\mathcal{F}_c(\partial R_i(t)) = \mathcal{F}_c(\partial R)-\sum_{j=i}^{m/2-2} G(w_{2j+3}) - \sum_{j=1}^{i-1} H(w_{2j}).
\end{equation}
\noindent
To construct $R_{m/2}$ from $R_{m/2-1}$ we add in the triangular region $t_m$.  Thus we have

\begin{equation}\label{aaa}
\mathcal{F}_c(\partial R_{m/2}) = \mathcal{F}_c(\partial R)- \sum_{j=1}^{m/2} H(w_{2j}),
\end{equation}
while for all $0\leq t\leq 1$
we have 
\begin{equation}\label{bbb}
\mathcal{F}_c(\partial R_{m/2-1}(t)) = \mathcal{F}_c(\partial R) - \sum_{j=1}^{m/2-1} H(w_{2j}).
\end{equation}

It follows from \eqref{downok}, \eqref{downok2}, \eqref{aa}, \eqref{bb}, \eqref{aaa}, and \eqref{bbb} that the $\mathcal{F}_c$ functional stays strictly below $\mathcal{F}_c(\partial R)$ in the homotopy from $R_-=R_1$ to $R_+=R_{m/2}$.  Since we have constructed a sweepout of $\Sigma$ so that the $\mathcal{F}_c$ functional on each slice is less than $\mathcal{F}_c(\partial R)$ we obtain a contradiction to the definition of width $W_c$.

Ruling out existence of more than one vertex is similar to the 
above argument. We first make a deformation in a small ball around
$p_1$, then around $p_2$ and so on. The value of $\F_c$ will
always be strictly smaller than $\F_c(\partial R)$ during the homotopy.

We conclude that  $\gamma$ has exactly one double point at $p_1$ with tangent cone consisting of two  lines.  If the lines coincide, then $\gamma$ is a quasi-embedded curve and $\gamma$ is smooth.  Suppose the lines are distinct.  The curve $\gamma$ cannot extend smoothly through $p$ as such a configuration of two curvature $\pm c$ arcs meeting transversally at $p$ is not stationary for the $\mathcal{F}_c$ functional. On the other hand $\gamma$ must extend as a $C^{1,1}$ immersion through $p$ as, along each arc passing through $p$, the normals coincide at $p$ but the curvature jumps from $c$ to $-c$, or vice versa.  

%Note finally that the claims about the index follow in case (1) because we either had a stable curvature $c$ curve or the purported curve arose from a one parameter min-max procedure.  Since the curve can be exhibited in an optimal sweepout, if it had index greater than $1$, one could find an orthogonal deformation to bring the $\mathcal{F}_c$ functional below $W_c$ (see for instance Lemma 3.5 in \cite{MN}). 
\end{proof}

%\begin{proof}
%Assume without loss of generality that there no smooth quasi-embedded cc $c$ solutions.  Let $X$ denote the cc width.  Thus we can assume it is a cc $c$ net with one singular point $p$ .  It might also have self-touching points $\mathcal{S}=\{s_1, ... s_l\}$.   

%Consider the connected components $\mathcal{C}=\{C_1,...C_k\}$ of $\Sigma\setminus X$ and their boundary curves $\gamma_1, ..., \gamma_k$ ($k\geq 3$ since we assume $X$ is not an embedded curve).   XX

  %Let $\mathcal{C}_R$ denote (after relabelling) those components $C_1, ... C_j$ comprising $R$ and $\mathcal{C}_{\Sigma\setminus R}$ denote the other connected components   $C_j, ... C_k$ that make up $\Sigma\setminus R$.  

% By the argument in the proof of Theorem \ref{main} we can obtain an optimal foliation of $\Sigma$ with $X$ as the largest $\mathcal{F}_c$ slice (since we are assuming there no smooth quasi-embedded cc $c$ solutions, we can apply Lemma \ref{constrained} to continue the optimal sweepouts into the $c$ mean concave part  $\Sigma\setminus R$).  
%\end{proof}

\section{Examples}\label{examples}

\begin{figure} 
   \centering	
	\includegraphics[scale=0.75]{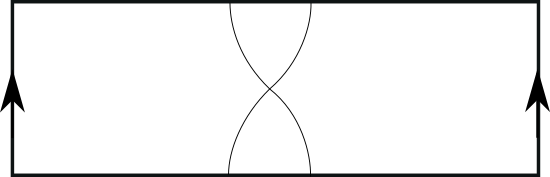}
	\caption{Min-max constant curvature curve
	on a thin torus has a $C^{1,1}$ self-intersection point.}
	\label{fig:torus}
	\end{figure}

In this section we show that Theorem \ref{main3} is sharp by exhibiting Riemannian surfaces whose width for the $\mathcal{F}_c$ functional is realized by a curve with a genuine $C^{1,1}$ singularity.

\subsection{Flat Tori}
Let $\mathcal{T}$ denote $\mathbb{R}^2/\mathcal{L}$ where $\mathcal{L}$ is the lattice generated by the unit vector in the $j$ direction and the vector of length $L>>1$ in the $i$ direction (i.e. an elongated rectangular torus with side lengths $1$ and $L$).  Denote by $\mathcal{F}$ the rectangular fundamental domain for $\mathcal{T}$ bounded by the four lines in $\mathbb{R}^2$ given by $x=0$, $y=0$, $x=L$ and $y=1$.  

Denote by $\mathcal{G}$ the closed geodesic:
\begin{equation}
\mathcal{G}:=\{(L/2,t)\subset\mathcal{T}\;|\; t\in [0,1]\}.
\end{equation}
For each $c>0$ there are precisely two circles $C_1$ and $C_2$ of radius $1/c$ in $\mathbb{R}^2$ passing through $(L/2,0)$ and $(L/2,1)$.   Consider the following subset of $\mathbb{R}^2$.
\begin{equation}
\mathcal{G}_c:=(C_1\cup C_2)\cap\mathcal{F}.
\end{equation}
Note that $\mathcal{G}_c$ can be parameterized as a $C^{1,1}$ curve in $\mathcal{T}$ that is smooth away from the point $(L/2,0)=(L/2,1)\subset\mathcal{T}$.  It can also be parameterized as the boundary of a lens-type disk.

We have the following well-known fact: 

\begin{lemma}
The Almgren-Pitts' width of $\mathcal{T}$ is realized by $2\mathcal{G}$.
\end{lemma}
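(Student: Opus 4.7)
The strategy has three parts: an explicit sweepout giving the upper bound $W\le 2$, an isoperimetric lower bound $W\ge 2$, and a classification of the stationary varifold that realizes the min-max.

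For the upper bound, I would exhibit the family $\Omega_t:=\pi((0,tL)\times[0,1])\subset\mathcal{T}$ for $t\in[0,1]$, where $\pi:\R^2\to\mathcal{T}$ is the quotient projection. This is an admissible sweepout: $\Omega_0=\emptyset$ and $\Omega_1=\mathcal{T}$. For every $t\in(0,1)$ the boundary $\partial\Omega_t$ consists of two parallel translates of $\mathcal{G}$, each of length $1$, so $\mbox{Length}(\partial\Omega_t)=2$. Hence $W\le 2 = \mbox{Length}(2\mathcal{G})$.

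For the lower bound, given any admissible sweepout $\{\Omega_t\}$, the continuous area function $t\mapsto\mbox{Area}(\Omega_t)$ runs from $0$ to $L$, so by the intermediate value theorem there is $t^*\in(0,1)$ with $\mbox{Area}(\Omega_{t^*})=L/2$. The key input is the isoperimetric inequality on the thin torus: for $L$ large, any region of area $L/2$ in $\mathcal{T}$ has perimeter at least $2$. I would justify this by a direct competitor comparison: a geodesic disk of area $L/2$ has perimeter $\Theta(\sqrt{L})$, any band wrapping around the long $i$-direction has perimeter $\ge 2L$, and both dwarf $2$ when $L\gg 1$; a short symmetrization argument in the $j$-variable then pins the infimum to the value $2$, attained precisely by vertical strips bounded by two translates of $\mathcal{G}$. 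Therefore $\mbox{Length}(\partial\Omega_{t^*})\ge 2$, giving $W\ge 2$.

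For the identification of the realizing varifold, the Almgren-Pitts min-max theory produces a stationary integral $1$-varifold $V$ on $\mathcal{T}$ with $\|V\|(\mathcal{T})=W=2$. Stationary $1$-varifolds on a flat torus are supported on unions of closed geodesics (straight lines of rational slope) with positive integer multiplicities. Since $L\gg 1$, the only closed geodesics of length at most $2$ are translates of $\mathcal{G}$, so $V$ must equal $2\mathcal{G}$, either as $\mathcal{G}$ with multiplicity two or as two disjoint parallel copies of $\mathcal{G}$, both of which are stationary of mass $2$. The only real obstacle is the thin-torus isoperimetric inequality; it is classical but requires a short comparison argument to rule out disk-type and $i$-wrapping competitors, and is clean only under the hypothesis $L\gg 1$ that the lemma already assumes.
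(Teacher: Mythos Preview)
The paper does not prove this lemma; it is stated as a ``well-known fact'' and left without argument. Your three-step outline (explicit sweepout for the upper bound, isoperimetric inequality at the half-area slice for the lower bound, classification of the realizing varifold) is the standard route and is essentially correct.

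Two small points are worth tightening. First, your sentence ``Stationary $1$-varifolds on a flat torus are supported on unions of closed geodesics (straight lines of rational slope) with positive integer multiplicities'' is not true as stated: flat tori admit stationary geodesic \emph{networks} with triple junctions (e.g.\ honeycomb patterns on appropriately shaped tori). What actually pins down the limit is either the almost-minimizing property coming from the Almgren--Pitts construction, which rules out junctions in dimension one, or the crude observation that on the thin torus any stationary integral $1$-varifold of total mass~$2$ cannot contain a junction because every straight segment that closes up has length at least~$1$. Either suffices, but you should say which you are using. Second, your ``direct competitor comparison'' for the isoperimetric step is not a proof: checking that disks and long bands lose to vertical strips does not by itself show that vertical strips are optimal among \emph{all} regions of area $L/2$. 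The symmetrization you allude to (or a reference to the known isoperimetric profile of flat rectangular tori) is what actually does the work.

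Finally, note that the paper itself observes in a footnote that \emph{any} pair of shortest closed geodesics realizes the width; the lemma singles out $2\mathcal{G}$ (the multiplicity-two copy) because that is the configuration arising as the $c\to 0$ limit of the $C^{1,1}$ curves $\mathcal{G}_c$. Your identification step correctly allows both possibilities.
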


For the c-widths, we show:

\begin{proposition}[c-Width of Flat Tori]
%\begin{enumerate}
%\item For $c > 2$ the $c$-width of the flat torus $\mathcal{T}$ is realized by any closed circle contained in $\mathcal{T}$ of radius $1/c$. 
%\item For $c=2$ the $c$-width is realized by the smooth curve $\mathcal{G}_{1/2}$ that is quasi-embedded (there is one self-touching point). 
%\item For $c < 2$ the $c$-width of the flat torus $\mathcal{T}$ is realized by the closed $C^{1,1}$ curve $\mathcal{G}_c$ (that is smooth away from $(L/2,1)$ with multiplicity $1$.
%\end{enumerate}
For $c$ close to $0$, the $c$-width of $\mathcal{T}$ is realized by $\mathcal{G}_c$.
Moreover, there holds for $c$ near $0$:
\begin{equation}\label{expansion}
\mathcal{F}_c(\mathcal{G}_c)= 2-\frac{1}{12}c^2+\mathcal{O}(c^3)
\end{equation}
and
\begin{equation}
\mathcal{G}_c\rightarrow 2\mathcal{G} \mbox{ in the sense of varifolds as } c\rightarrow 0.
\end{equation}
Finally for each $c\in (0,\infty)$ there exists an optimal sweepout $\{\Sigma_t\}_{t=-1}^1$ of $\mathcal{T}$ so that $\Sigma_{0}=\mathcal{G}_c$ and $\mathcal{F}_c(\Sigma_t)< \mathcal{F}_c(\mathcal{G}_c)$ whenever $t\neq 0$.   
\end{proposition}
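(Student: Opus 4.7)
The plan is to handle the four claims in order. The asymptotic formula and varifold limit are direct computations; the optimal sweepout is an explicit two-phase construction; and the width identification for small $c$ combines the resulting upper bound with an application of Theorem \ref{main2}.

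For the asymptotic formula, parameterize each arc of $\mathcal{G}_c$ by its half-angle $\alpha = \arcsin(c/2)$ at the center of the circle of radius $1/c$. Each arc has length $2\alpha/c$, so the total length is $4\alpha/c = 2 + c^2/12 + O(c^4)$, while the enclosed lens is the union of two circular segments of total area $(2\alpha - \sin 2\alpha)/c^2 = c/6 + O(c^3)$. Combining gives $\mathcal{F}_c(\mathcal{G}_c) = 2 - c^2/12 + O(c^3)$. The varifold convergence $\mathcal{G}_c \to 2\mathcal{G}$ as $c \to 0$ is then immediate: each arc of $\mathcal{G}_c$ converges uniformly to $\mathcal{G}$, so the two multiplicity-one arcs collapse onto the same geodesic, yielding multiplicity two in the limit.

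For the optimal sweepout, I build it in two phases meeting at $\Sigma_0 = \mathcal{G}_c$. In Phase A ($t \in [-1, 0]$), let $\Sigma_t$ be the lens with chord of length $h(t) = 1 + t \in [0,1]$ along $\mathcal{G}$ and arcs of curvature $c$; a short computation yields $\mathcal{F}_c(\Sigma_t) = (2\alpha + \sin 2\alpha)/c$ with $\sin \alpha = c h(t)/2$, and $\frac{d}{dh}\mathcal{F}_c = 2\cos\alpha > 0$, so $\mathcal{F}_c$ strictly increases to $\mathcal{F}_c(\mathcal{G}_c)$ at $h = 1$. In Phase B ($t \in [0, 1]$), let $\Sigma_t = B_{w(t)}$ where $B_w = \bigcup_{|s| \leq w/2} L^c_s$ is the union of $x$-translates of the lens enclosed by $\mathcal{G}_c$. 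For $w \leq L - c/4$, the boundary of $B_w$ consists of two translated arcs of unchanged total length $4\alpha/c$, and by the Minkowski-sum structure the area grows linearly as $w + c/6$; hence $\mathcal{F}_c(B_w) = \mathcal{F}_c(\mathcal{G}_c) - cw$, strictly decreasing. For $w \in [L - c/4, L]$ the band wraps around $\mathcal{T}$ and the complement is a shrinking lens-like region bounded by two curvature-$c$ arcs with outward-pointing curvature vectors; parameterizing by $\kappa = 1 - 4(L-w)/c \in [0, 1]$ and integrating yields explicit closed-form expressions for the perimeter and area, from which a direct derivative computation confirms $\frac{d}{dw}\mathcal{F}_c(B_w) < 0$ throughout, terminating at $-cL$ when $w = L$. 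After reparametrizing, this gives the claimed optimal sweepout for every $c$ in the admissible range.

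For the width identification (the case $c$ close to $0$), the upper bound $W_c \leq \mathcal{F}_c(\mathcal{G}_c)$ is immediate from the sweepout above; the main obstacle is the matching lower bound. The plan is to apply Theorem \ref{main2} to obtain a critical set $\gamma^*$ with $W_c = \mathcal{F}_c(\gamma^*)$ and then classify critical configurations on the flat torus. Since $W_c \leq 2 - O(c^2)$ while the Almgren--Pitts width of $\mathcal{T}$ (for the length functional) equals $2$, $\gamma^*$ must degenerate as $c \to 0$ to a doubled closed meridian geodesic. The only $c$-convex, quasi-embedded $C^{1,1}$ curvature-$\pm c$ configurations on $\mathcal{T}$ that degenerate in this way to a multiplicity-two meridian are the translates of $\mathcal{G}_c$ (by the implicit function theorem on the finite-dimensional space of lens-type configurations, or equivalently by uniqueness for the prescribed-curvature ODE with the imposed tangent-cone vertex condition). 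Hence $\gamma^*$ is a translate of $\mathcal{G}_c$, and $W_c = \mathcal{F}_c(\mathcal{G}_c)$.
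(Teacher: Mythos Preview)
Your argument is correct and tracks the paper's approach closely. The width identification via Theorem~\ref{main2} together with the classification of curvature-$\pm c$ configurations limiting to a doubled meridian is exactly what the paper does (the paper phrases it as ``$\gamma_c$ converges to $2\mathcal{G}$ smoothly with multiplicity $2$'' and then observes $\mathcal{G}_c$ is the only such configuration).

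Two points of comparison. First, your derivation of the expansion is somewhat cleaner than the paper's: by writing $\mathcal{F}_c(\mathcal{G}_c)=(2\alpha+\sin 2\alpha)/c$ with $\sin\alpha=c/2$ you avoid the separate Taylor expansions of $\arccos(\sqrt{1-c^2/4})$ and $\sqrt{1/c^2-1/4}$ that the paper carries out term by term; of course the two expressions agree since $\sin 2\alpha/c=\cos\alpha$. Second, you supply an explicit two-phase sweepout, which the paper asserts but does not construct. Your Phase~A computation is correct. In Phase~B, one small clarification: for $w>0$ the region $B_w$ is an annulus on $\mathcal{T}$ (the segment $[L/2-w/2,L/2+w/2]\times\{0\}$ becomes interior after the identification $y=0\sim y=1$), and each boundary component, while a translate of an arc of $\mathcal{G}_c$, is a closed curve on the torus with a corner at $y=0$. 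This does not affect your length and area bookkeeping, so $\mathcal{F}_c(B_w)=\mathcal{F}_c(\mathcal{G}_c)-cw$ stands, but it is worth noting that the slices are not smooth in the sense required by the definition of $\Pi$; a routine smoothing handles this. The wrap-around phase is as you describe.
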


\begin{remark}
Note that $2\mathcal{G}$ is a critical point of the $\mathcal{F}_c$ functional, and our interest in the expansion \eqref{expansion} is to see directly that it cannot arise as the $c$-width of the torus as $\mathcal{F}_{c}(\mathcal{G}_c) <\mathcal{F}_c(2\mathcal{G})=2$ and $\mathcal{G}_c$ can be exhibited in an optimal family. 
\end{remark}

\begin{proof}
Let $\gamma_c$ denote the $c$-width of $\mathcal{T}$.  Note that away from at most one point, since the curvature of $\gamma_c$ is bounded, we have that $\gamma_c$ converges to $2\mathcal{G}$ smoothly with multiplicity $2$\footnote{Note that \emph{any} pair of minimal length closed geodesics in $\mathcal{T}$ also realizes the Almgren-Pitts width, but only a geodesic with multiplicity $2$ can be realized as the limit of curvature $c$ curves that are smooth away from a point.}.  But curvature $c$ arcs are pieces of circles of radius $1/c$, and it is not hard to see that $\mathcal{G}_c$ is the only possible configuration that is smooth away from at most one point and converges to $2\mathcal{G}$ smoothly away from a point.

%Constant curvature curves in the flat torus are pieces of circles. It is not hard to convince oneself that in each case above the purported curve is the constant curvature closed curve with at most one $C^{1,1}$ point for which the $\mathcal{F}_c$ functional is minimal.

Let us verify the expansion \eqref{expansion}.  Straightforward trigonometry gives that the length $\mbox{Length}(\mathcal{G}_c)$ of $\mathcal{G}_c$ is:
\begin{equation}\label{length}
\mbox{Length}(\mathcal{G}_c)=\frac{4}{c}\arcsin(\frac{1}{2}c).
\end{equation}
The formula for the area of the lens-type disk bounded by $\mathcal{G}_c$ is given by
\begin{equation}
\mbox{Area}(\mathcal{G}_c)=2\arccos(\sqrt{1-c^2/4})-c^2\sqrt{\frac{1}{c^2}-\frac{1}{4}}.
\end{equation}
We want to compute:
\begin{equation}
\mathcal{F}_c(\mathcal{G}_c)=\mbox{Length}(\mathcal{G}_c)-c\mbox{Area}(\mathcal{G}_c),
\end{equation}
which is thus given as
\begin{equation}\label{dd}
\mathcal{F}_c(\mathcal{G}_c)=\frac{4}{c}\arcsin(\frac{1}{2}c))-\frac{2}{c}\arccos(\sqrt{1-\frac{c^2}{4}})+c\sqrt{\frac{1}{c^2}-\frac{1}{4}}
\end{equation}
Let us expand the first term in \eqref{dd} using the Taylor series: $\arcsin(x)=x+x^3/6+\mathcal{O}(x^5)$:
\begin{equation}\label{a}
\frac{4}{c}\arcsin(\frac{1}{2}c))= 2+\frac{1}{12}c^2+\mathcal{O}(c^4).
\end{equation}
We can expand the third term as
\begin{equation}\label{c}
c\sqrt{\frac{1}{c^2}-\frac{1}{4}}= 1-\frac{1}{8}c^2+\mathcal{O}(c^4).
\end{equation}
For the second term, we obtain the limit at $c=0$ using L'Hospital's rule:
\begin{equation}
\lim_{c\rightarrow 0} \frac{2}{c}\arccos(\sqrt{1-\frac{c^2}{4}})=\lim_{c\rightarrow 0} \frac{1}{\sqrt{1-\frac{c^2}{4}}}=1.
\end{equation}
Moreover, we can compute the derivative of $f(x) :=\frac{1}{\sqrt{x}}\arccos(\sqrt{1-\frac{x}{4}})$ at $0$ as
\begin{equation}
\dot{f}(0)= \lim_{x\rightarrow 0} \frac{1}{x}( \frac{1}{4\sqrt{1-x/4}}-\frac{1}{2}f(x)).
\end{equation}
By L'Hospital's rule again we obtain:
\begin{equation}
\dot{f}(0)=\frac{d}{dx}\bigg|_{x=0} \frac{1}{4\sqrt{1-x/4}}- \frac{1}{2}\dot{f}(0), 
\end{equation}
from which we obtain
\begin{equation}
\dot{f}(0)=\frac{1}{48}.
\end{equation}
Thus we get the Taylor expansion:
\begin{equation}\label{b}
\frac{2}{c}\arccos(\sqrt{1-\frac{c^2}{4}})=1+\frac{1}{24}c^2+\mathcal{O}(c^4).
\end{equation}
Plugging \eqref{a}, \eqref{b} and \eqref{c} back into \eqref{dd} we obtain \eqref{expansion}.
\end{proof}

%\subsection{Hyperbolic surfaces} 

%It is well-known that when $c=1$ there are no smoothly immersed curves of curvature $1$ in any closed hyperbolic surface.  In fact, the projection of any horoball to such a surface is a \emph{dense} curve. 
%Thus let $\Gamma$ be a discrete subgroup of $SL(2,\mathbb{R})$ acting properly discontinuously on $\mathbb{H}$ so that $\Sigma=\mathbb{H}/\Gamma$ is a closed surface.  Then for $c=1$, by Theorem \ref{main2} there exist two horoballs $H_1$ and $H_2$ in $\mathbb{H}$  and a fundamental domain $\mathcal{P}\subset \mathbb{H}$ of the action so that  
%\begin{enumerate}
%\item $H_1\cap H_2$ consists of two points in $\partial \mathcal{P}$ that are identified under $\Gamma$.  
%\item  $(H_1\cup H_2)\cap\mathcal{P}$ projects in $\Sigma$ to a $C^{1,1}$ curve of curvature $1$.
%\end{enumerate}
%or else 
%\begin{enumerate}
%\item $H_1$ and $H_2$ are tangent at a point 
%\item $(H_1\cup H_2)\cap \mathcal{P}$ projects in $\Sigma$ to a closed $C^{1,1}$ curve of curvature $1$.
%\end{enumerate}

%\begin{proposition}[c-Width of Hyperbolic Surfaces]
%Let $N:=\mathbb{H}/\Gamma$ be a closed hyperbolic surface (so that $\Gamma$ is a discrete subgroup of $SL(2,\mathbb{R})$ acting properly discontinuosly on $\mathbb{H}$).    Let $\gamma$ be a closed embedded geodesic in $N$.   Then for each $0<c\leq 1$ there exists a $C^{1,1}$ closed curve $\gamma_c$ with constant mean curvature equal to $c$ so that 
%\begin{equation}
%\gamma_c\rightarrow 2\gamma \mbox{ in the sense of varifolds as } c\rightarrow 0.
%\end{equation}
%\end{proposition}

\section{c-flow}\label{sectioncflow}

In this section, we introduce the $c$-flow, the gradient flow for the $\mathcal{F}_c$ functional. The results of this section are not needed in the paper but the flow is geometrically very natural and can give an alternative approach to some arguments in this paper. Namely, instead of rounding off corners of a $c$-convex region $R$, we can apply the $c$-flow for short time.   On the other hand, the $c$-flow seems badly behaved compared to the curve shortening flow  (for instance, it does not preserve embeddedness).  

If $\gamma$ is a closed curve bounding a disk $R$, let us denote $n_R$ the inward normal with respect to $R$ and let us define the curvature $k_\gamma$ of $\gamma$ with respect to this normal.   Thus a round circle the plane of radius $R$ bounding the disk has curvature $1/R$, while if we consider the circle bounding the complement of the disk, then it has curvature $-1/R$.  

Given a closed curve $\gamma_R$ bounding a region $R$, we consider the $\mathcal{F}_c$ functional
\begin{equation}
\mathcal{F}_c(\gamma_R)=\mbox{Length}(\gamma_R)-c\mbox{Area}(R).  
\end{equation}

The gradient flow for this functional is 
\begin{equation}\label{cflow}
\partial_t\gamma(x,t)=(k(x,t) - c)n_R(x,t).  
\end{equation}
so that we have
\begin{equation}
\frac{d}{dt}\mathcal{F}_c(\gamma(x,t)=-\int_{\gamma(x,t)} |k(x,t)-c|^2dx
\end{equation}

In $\mathbb{R}^2$ a round circle of radius greater than $1/c$ increases its radius under the $c$ flow without bound while circles of radii less than $1/c$ shrink to a point in finite time.  

This also shows that the $c$-flow does not preserve embeddedness.  Consider the $c$-flow on a flat torus. For $c$ very large, and an initial condition consisting of a circle of radius slightly greater than $1/c$, the circle under the $c$-flow will enlarge until it bumps into itself.

It can be useful to understand how the flow works on a round two sphere $\mathbb{S}^2$.  Consider the optimal foliation $\Sigma_t$ for $t\in [-1,1]$
\begin{equation}
\Sigma_t := \{(x,y,z)\; |\; x^2+y^2+z^2 =1 \mbox{ and } z=t\}
\end{equation} 
and corresponding regions 
\begin{equation}
R_t := \{(x,y,z)\;|\; x^2+y^2+z^2 =1 \mbox{ and } z\leq t\}. 
\end{equation}

There exists a smooth increasing function $f: (0,\infty) \rightarrow (0,1)$ so that the circle $\Sigma_{f(c)}$ has curvature $c$.  Only $\Sigma_{-f(c)}$ is stationary for the $\mathcal{F}_c$ functional even though $\Sigma_{f(c)}$ also has constant curvature equal to $c$.   For any tiny $\varepsilon>0$, If we apply the $c$ flow to $\Sigma_{-f(c)-\varepsilon}$ then it will foliate the region 
$R_{f(c)-\varepsilon}$ while if we apply the $c$ flow to $\Sigma_{-f(c)+\varepsilon}$ it will foliate the mean concave region $\mathbb{S}^2\setminus R_{-f(c)+\varepsilon}$ (and pass over the equator $\Sigma_0$).

One has the following evolution equation for the flow (cf. Lemma 1.3 in \cite{G})
\begin{lemma}\label{evolution}
\begin{equation}\label{evol}
k_t = k_{ss} + k^2(k-c) + G(k-c), 
\end{equation}
\noindent
where $G(x)$ denotes the Gaussian curvature of $\Sigma$, and $s$ is the arc-length parameter along $\gamma(x,t)$.    
\end{lemma}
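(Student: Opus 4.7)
The plan is to derive the evolution equation from the more general formula for curvature evolution under a normal flow $\partial_t\gamma = v\,n_R$ in a Riemannian surface, namely
\begin{equation*}
k_t = v_{ss} + v(k^2 + G),
\end{equation*}
and then substitute $v = k - c$. This immediately yields
\begin{equation*}
k_t = (k-c)_{ss} + (k-c)(k^2 + G) = k_{ss} + k^2(k-c) + G(k-c),
\end{equation*}
which is \eqref{evol}.

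To establish the general formula I would parametrize the curve as $\gamma(x,t)$ and first compute $\partial_t|\gamma_x| = -vk\,|\gamma_x|$, yielding the commutator relation $[\partial_t,\partial_s] = vk\,\partial_s$. Differentiating the unit tangent $T = \partial_s\gamma$ in $t$ and using the Frenet relation $\nabla_T T = k\,n_R$ together with this commutator, one obtains $\nabla_{\partial_t} T = v_s\,n_R$ and, by orthonormality of the frame, $\nabla_{\partial_t} n_R = -v_s\,T$. Differentiating $k = \langle \nabla_T T, n_R\rangle$ in $t$ then requires commuting $\nabla_{\partial_t}$ past $\nabla_T$, which produces a Riemann curvature term $\langle R(\partial_t\gamma, T)T, n_R\rangle = v\,G$ since the sectional curvature of a surface equals its Gaussian curvature. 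Combining this with $\nabla_T\nabla_{\partial_t} T = v_{ss}\,n_R - v_s k\,T$ and the commutator correction, the $T$-components cancel and the $n_R$-components assemble to $k_t = v_{ss} + v(k^2 + G)$.

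The only real obstacle is the careful bookkeeping of the non-arc-length parametrization and the placement of the Riemann curvature contribution; once the general formula is secured, the specialization $v = k-c$ is immediate. As an alternative that avoids redoing any commutator calculation, note that the $c$-flow is linear in $v$, so one may split $v = k - c$ into the curve-shortening piece $v = k$ (yielding the classical $k_t = k_{ss} + k^3 + Gk$) and the constant-normal-speed piece $v = -c$ (yielding $-c(k^2+G)$ from the same general formula with $v_{ss} = 0$), and simply add these to recover \eqref{evol}.
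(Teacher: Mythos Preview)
Your derivation is correct: the general normal-flow identity $k_t = v_{ss} + v(k^2+G)$ followed by the substitution $v=k-c$ gives exactly \eqref{evol}, and your commutator/curvature bookkeeping is accurate. The paper itself does not supply a proof of this lemma but simply refers the reader to Grayson \cite{G} (Lemma~1.3), where the curve-shortening case of precisely this computation is carried out; your argument is the standard derivation behind that citation, so there is nothing to add.
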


Thus one has the following corollary of Lemma \ref{evolution} and the maximum principle:
\begin{corollary}\label{flow}
Let $\Sigma$ be a surface with $G(x)\geq 0$ and let $\gamma_0$ be a closed curve bounding a region $R$.  Let $\gamma_t$ be the evolution of the curve $\gamma$ by the $c$-flow \eqref{cflow}.    Then the condition 
\begin{equation}
k_{\gamma_t} \leq c
\end{equation}
holds whenever $k_{\gamma_0} \leq c$
and 
\begin{equation}
k_{\gamma_t} \geq c
\end{equation}
holds whenever $k_{\gamma_0} \geq c$.
\noindent
\end{corollary}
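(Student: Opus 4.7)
\emph{Approach.} I would set $u := k - c$ and use Lemma \ref{evolution} to rewrite the evolution as the linear parabolic equation
\[
u_t = u_{ss} + (k^2 + G)\, u
\]
along the flow, where $s$ denotes the arc-length parameter on the time-varying curve $\gamma_t$. The corollary then reduces to showing that the sign of $u$ is preserved by this equation, which is a standard application of the parabolic maximum principle.

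\emph{Execution.} On any closed time interval $[0,T]$ of smooth existence of the flow, the curvature $k$ is uniformly bounded on the compact curve and $G$ is bounded on the compact surface $\Sigma$, so the coefficient $k^2 + G$ is bounded. To neutralize the zero-order term I would introduce $v := e^{-At} u$ with $A > \sup_{[0,T]}(k^2 + G)$; a direct computation gives
\[
v_t = v_{ss} + (k^2 + G - A)\, v,
\]
with strictly negative zero-order coefficient. If $v$ attained a positive maximum at some $(x_0, t_0)$ with $t_0 > 0$, then at that point $v_t \geq 0$, $v_{ss} \leq 0$, and $(k^2 + G - A) v < 0$, contradicting the equation. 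Hence $\max_x v(\cdot, t) \leq \max_x v(\cdot, 0)$, so if $u(\cdot, 0) \leq 0$ then $u(\cdot, t) \leq 0$ for all $t \in [0, T]$; that is, $k_{\gamma_t} \leq c$. Extending over the full smooth existence interval and applying the same argument to $-u$ yields the complementary inequality.

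\emph{Main obstacle.} The argument is essentially routine once the evolution equation of Lemma \ref{evolution} is in hand, so there is no deep obstacle beyond setting up the maximum principle correctly on a time-varying curve (which is standard for curvature-flow PDEs). The hypothesis $G \geq 0$ enters mostly as a convenience: it guarantees $k^2 + G \geq 0$, so that the zero-order coefficient already has the sign that favors sign preservation of $u$, allowing one to bypass the exponential rescaling and invoke a classical weak maximum principle directly. As the argument above shows, however, only boundedness of $G$ (automatic on compact $\Sigma$) is truly required for the conclusion.
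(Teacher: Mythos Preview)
Your proof is correct and follows essentially the same route as the paper: both use the evolution equation of Lemma~\ref{evolution}, recast it in terms of $u=k-c$ satisfying $u_t = u_{ss} + (k^2+G)u$, and invoke the parabolic maximum principle to conclude that the sign of $u$ is preserved. The paper's proof is terser---it simply observes that when $k\leq c$ the nonlinear term $(k^2+G)(k-c)$ is nonpositive (using $G\geq 0$) so the maximum of $k$ is nonincreasing---whereas you spell out the exponential rescaling $v=e^{-At}u$; this is a routine elaboration of the same idea, and your remark that only boundedness of $G$ is really needed is a valid sharpening.
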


\begin{proof}
If $k\leq c$ initially then \eqref{evol} together with the fact that $G(x)\geq 0$ implies that the maximum of $k$ is  non-increasing in time.  Similarly if $k\geq c$ then  \eqref{evol} together with $G(x)\geq 0$ implies that the minimum of $k$ is non-decreasing in time.
\end{proof}
If $\gamma$ bounds a region $R$, let us say $\gamma$ is \emph{strictly $c$-convex} to $R$ if $\gamma$ is smooth and $k_\gamma>c$.  Similarly, we say $\gamma$ is \emph{strictly $c$-concave} to $R$ if it is smooth and  $k_\gamma <  c$. 

We have the following consequence of the strong maximum principle which alternatively could have been used in the proof of Lemma \ref{constrained}. 
\begin{lemma}\label{preserved} 
Suppose $\gamma_0$ is $c$-convex (resp. $c$-concave), has curvature somewhere equal to $c$ but not identically equal to $c$.  Then there exists a small time $t_0$ so that the flow 
\begin{equation} 
\partial_t\gamma(x,t)=(k(x,t) - c)n(x,t).  
\end{equation}
with initial condition $\gamma(x,t)=\gamma_0(x)$ is well-defined on $0<t<t_0$ and furthermore $\gamma_t$ is strictly $c$-convex (resp. strictly $c$-concave) for any $0 < t \leq t_0$.
\end{lemma}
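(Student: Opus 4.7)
\medskip

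The plan is to combine short-time existence for the $c$-flow with the strong maximum principle applied to the evolution equation for the curvature from Lemma \ref{evolution}. Since this result parallels Corollary \ref{flow}, which gave the \emph{weak} maximum principle statement $k_{\gamma_t}\geq c$, the new content is only to upgrade non-strict to strict inequality (and to dispense with the hypothesis $G\geq 0$ used in Corollary \ref{flow}, since the zeroth-order term will now be handled by a multiplier trick).

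First I would establish short-time existence of the flow. The equation \eqref{cflow} differs from the classical curve shortening flow $\partial_t\gamma=k\,n$ only by the smooth lower-order term $-c\,n$. Parametrizing $\gamma_t$ graphically in a tubular neighborhood of $\gamma_0$, one obtains a quasilinear parabolic equation for the graphing function, and standard parabolic existence theory applied to smooth initial data yields $t_0>0$ and a smooth solution $\{\gamma_t\}_{0\leq t<t_0}$ of \eqref{cflow}. By shrinking $t_0$ if necessary, the curvature $k(x,t)$ and its spatial and temporal derivatives will be uniformly bounded on $\bS^1\times[0,t_0]$.

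Next, consider the $c$-convex case and set $u(x,t):=k(x,t)-c$. By Lemma \ref{evolution},
\begin{equation*}
u_t \;=\; u_{ss} + h(x,t)\,u, \qquad h(x,t):=k(x,t)^2 + G(\gamma(x,t)),
\end{equation*}
which is a linear parabolic equation for $u$ with bounded coefficient $h$ on $\bS^1\times[0,t_0]$. By hypothesis $u(\cdot,0)\geq 0$ and $u(\cdot,0)\not\equiv 0$. To handle the sign of $h$, I would pick $M\geq \sup_{\bS^1\times[0,t_0]}h$ and set $\tilde u:=e^{-Mt}u$, so that
\begin{equation*}
\tilde u_t \;=\; \tilde u_{ss} + (h-M)\tilde u, \qquad h-M\leq 0.
\end{equation*}
The weak maximum principle for this equation gives $\tilde u\geq 0$ throughout, and the strong parabolic maximum principle (applicable because $h-M\leq 0$ and $\tilde u\geq 0$, $\tilde u\not\equiv 0$ initially on the compact manifold $\bS^1$) then yields $\tilde u>0$, and hence $u>0$, for every $0<t\leq t_0$. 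This is precisely the assertion that $k_{\gamma_t}>c$ on all of $\gamma_t$, i.e.\ $\gamma_t$ is strictly $c$-convex to the region it bounds. The $c$-concave case is identical with $u:=c-k$: since the right-hand side of \eqref{evol} is linear in $(k-c)$, one checks that $u$ satisfies the same linear parabolic equation $u_t=u_{ss}+hu$, so the same multiplier-plus-strong-maximum-principle argument applies.

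The main obstacle is essentially bookkeeping: one must (i) extract a genuinely \emph{linear} parabolic equation from the quasilinear flow by fixing the solution and reading $h$ off as a known, bounded coefficient, and (ii) absorb the possibly positive zeroth-order term into a multiplier $e^{-Mt}$ so that the strong maximum principle in its standard form (requiring non-positive zeroth-order coefficient) can be invoked without the positive-curvature hypothesis on $\Sigma$ that was needed in Corollary \ref{flow}. Beyond this, no new analytic input is needed.
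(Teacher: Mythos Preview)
Your proposal is correct and matches the paper's approach: the paper does not give a detailed proof of this lemma, stating only that it is ``a consequence of the strong maximum principle.'' Your argument supplies precisely the details that sentence gestures at---short-time existence plus the strong maximum principle applied to $u=k-c$ via the evolution equation of Lemma~\ref{evolution}---and the multiplier trick to absorb the zeroth-order term is the standard way to remove the sign hypothesis on $G$ that was used in Corollary~\ref{flow}.
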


\end{document}